\newtheorem{theorem}{Theorem}
\newtheorem{proposition}[theorem]{Proposition}
\newtheorem{lemma}[theorem]{Lemma}
\newtheorem{corollary}[theorem]{Corollary}
\newtheorem{definition}[theorem]{Definition}
\newtheorem{remark}[theorem]{Remark}
\newtheorem{example}[theorem]{Example}
\newcommand{\R}{{\mathbb R}}
\newcommand{\Q}{{\mathbf Q}}
\renewcommand{\int}{\rm Int}
\newcommand{\E}{\mathbb E}
\newcommand{\PP}{\Bbb {P}}
\newcommand{\lk}{{\rm {Lk}}}
\newcommand{\D}{{\mathfrak D}}
\newcommand{\T}{{\mathcal T}}
\newcommand{\p}{{\mathfrak p}}
\newenvironment{dedication}
    {\vspace{6ex}\begin{quotation}\begin{center}\begin{em}}
    {\par\end{em}\end{center}\end{quotation}}
\begin{document}

\title{Large Random Simplicial Complexes, III\\
The Critical Dimension}          
\author{A. Costa and M. Farber}        
\date{December 19, 2015}          
\maketitle

\begin{abstract}
In this paper we study the notion of critical dimension of random simplicial complexes in the general multi-parameter 
model described in \cite{CF14}, \cite{CF15}, \cite{CF15a}. This model includes as special cases the Linial-Meshulam-Wallach model \cite{LM}, \cite{MW} as well as the clique complexes of random graphs. 
We characterise the concept of critical dimension in terms of various geometric and topological properties of random simplicial complexes such as their Betti numbers, the fundamental group, the size of minimal cycles and the degrees of simplexes. We mention in the text a few interesting open questions.
\end{abstract}

\begin{dedication}
To the memory of Tim Cochran
\end{dedication}

\section{Introduction}

This paper continues our recent publications \cite{CF14}, \cite{CF15}, \cite{CF15a}, in which we introduced a new class of random simplicial complexes depending on many
probability parameters $p_0, p_1, \dots, p_r$; here $p_i\in [0,1]$ determines the probability that an $i$-dimensional simplex 
$\sigma$ is included in our random simplicial complex $Y$ given that the boundary $\partial \sigma$ is included into 
$Y$. This model of random simplicial complexes includes as special cases many previously studied models such as the 
Erd\"os-R\'one random graphs \cite{ER}, the Linial - Meshulam - Wallach random simplicial complexes \cite{LM}, \cite{MW} as well as the clique complexes of random graphs \cite{Kahle1}, \cite{CFH}. 

The main motivation to study large random simplicial complexes comes from the need of modelling large complex systems in various applications 
when the classical notion of configuration space becomes inadequate. These applications include the new theory of large networks \cite{Newman}.
Recent surveys covering various aspects of topology of large random spaces can be found in \cite{CFK} and \cite{Ksurvey}. 

This paper focuses on the notion of a {\it critical dimension} of random simplicial complexes. 
We show that this notion influences the topology of random complexes in a number of different ways. Firstly, the Betti number in the critical dimension significantly dominates the Betti numbers in all other dimensions (see Theorem \ref{thmdom}, statement (2)) and the reduced Betti numbers below the critical dimension vanish (see Theorems \ref{thmgarland} and \ref{k1}. 
Secondly, most simplexes $\sigma \subset Y$ of a random simplicial complex $Y$ in dimensions at or above the critical dimension have degree zero while simplexes below the critical dimension have their degrees unbounded (see Corollary \ref{degree}). 
Thirdly,  above the critical dimension any minimal cycle has a bounded size (Theorem \ref{smallcycles}); we use the Dehn-Sommerville relations to establish an upper bound on the size of spheres 
above the critical dimension which can be embedded into random simplicial complexes (Theorem \ref{smallspheres}). Besides, we show that 
 the fundamental group of a random simplicial complex has property (T) if the critical dimension is $\ge 2$ and, 
moreover, a random simplicial complex is simply connected if the critical dimension is $\ge 3$, see Theorems \ref{k2} and 
\ref{kgreater3}.

There is a small overlap between the results presented in this paper and the preprint of C. Fowler \cite{Fowler} although these two papers largely complement each other.

 This research was supported by a grant from the EPSRC research council.




\section{The model}

Here we briefly recall the model of random simplicial complexes we studied in \cite{CF14}, \cite{CF15}, \cite{CF15a}. 

Let $\Delta_n$ denote the simplex with the vertex set $\{1, 2, \dots, n\}$.
We view $\Delta_n$ as an abstract simplicial complex of dimension $n-1$.
For a simplicial subcomplex $Y\subset \Delta_n$, we denote by $f_i(Y)$ the number of {\it $i$-faces} of $Y$ (i.e. $i$-dimensional simplexes of $\Delta_n$ contained in $Y$).
An \textit{external face} of a subcomplex $Y\subset \Delta_n$ is a simplex $\sigma \subset \Delta_n$ such that $\sigma \not\subset Y$ but the boundary of $\sigma$ is contained in $Y$,
$\partial \sigma \subset Y$. The symbol $e_i(Y)$ will denote the number of
$i$-dimensional external faces of $Y$.



%

Fix an integer $r\ge 0$ and a sequence ${\mathfrak p}=(p_0, p_1, \dots, p_r)$ of real numbers satisfying $0\le  p_i\le 1.$ Denote
$q_i=1-p_i.$
We consider the probability space ${\Omega_n^r}$ consisting of all subcomplexes
$Y\subset \Delta_n,$ with $ \dim Y\le r.$
The probability function
\begin{eqnarray}
\PP_{r,\p}: {\Omega_n^r}\to \R
\end{eqnarray} is given by the formula
\begin{eqnarray}\label{def1}
\PP_{r, \p}(Y) \, 
&=& \, \prod_{i=0}^r p_i^{f_i(Y)}\cdot \prod_{i=0}^r q_i^{e_i(Y)}
\end{eqnarray}
In (\ref{def1}) we use the convention $0^0=1$; in other words, if $p_i=0$ and $f_i(Y)=0$ then the corresponding factor in (\ref{def1}) equals 1; similarly if some $q_i=0$ and $e_i(Y)=0$.
One may show that $\PP_{r, \p}$ is indeed a probability function, i.e.
$$
\sum_{Y\subset \Delta_n^{(r)}}\PP_{r, \p}(Y) = 1,
$$
see \cite{CF14}.
Note that the probability $\PP_{r,\p}(Y)$ really depends on $r$. For example if $\dim Y=r-1$ then 
$$\PP_{r, \p}(Y) = \PP_{r-1, \p'}(Y)\cdot q_r^{e_r(Y)}$$
where $\p'=(p_0, \dots, p_{r-1})$.


%
%
%
%
%

%
%
%
%


\section{The notion of a critical dimension}

In this section we introduce the notion of a critical dimension of a multi-parameter random simplicial complex. 
In the following sections we shall explain the role this notion plays in topology of random simplicial complexes.

First we define the following domains $$\D_{-1}, \D_0, \D_1,\dots, \D_r\subset \R_+^{r+1}.$$
Consider the linear functions $$\psi_k: \R^{r+1}\to \R$$ where for $\alpha = (\alpha_0, \dots, \alpha_r)\in \R^{r+1}$ one has
\begin{align*}
\psi_k(\alpha)=\sum_{i=0}^r \binom{k}{i}\alpha_i,\quad\quad k=0,\ldots, r.
\end{align*} 
We use the conventions that $\binom{k}{i}=0$ for $i>k$ and $\binom{0}{0}=1$.

We assume below that $\alpha\in \R_+^{r+1}$, i.e. $\alpha=(\alpha_0, \dots, \alpha_r)$ with $\alpha_i\ge 0$ for all $i$.
Since $\binom k i < \binom {k+1} i$ for $i>0$ we see that
\begin{align}\label{ineq1}
\psi_0(\alpha)\leq\psi_1(\alpha)\leq\psi_2(\alpha)\leq\ldots\leq\psi_r(\alpha).
\end{align}
Moreover, if for some  $j\geq 0$ one has $\psi_j(\alpha)<\psi_{j+1}(\alpha)$ then
\begin{equation*}\label{ineq2}\psi_j(\alpha)<\psi_{j+1}(\alpha)<\ldots<\psi_r(\alpha).
\end{equation*}

Next we define the following convex domains in $\R^{r+1}_+$:
\begin{eqnarray}\label{defdk}
{\mathfrak D}_k = \{\alpha\in \R^{r+1}_+\, ;\,  \psi_k(\alpha)<1<\psi_{k+1}(\alpha)\},
\end{eqnarray}
where $k=0, 1, \dots, r-1$. 
One may also introduce the domains
$$\D_{-1}=  \{\alpha\in \R^{r+1}_+\, ;\,  1<\psi_0(\alpha)\}, \quad \D_{r}=  \{\alpha\in \R^{r+1}_+\, ;\,  \psi_r(\alpha)<1\}.$$
The domains $$\D_{-1}, \D_0, \D_1,\dots, \D_r$$ are disjoint and their union is $$\bigcup_{j=-1}^r \D_j \, = \, \R^{r+1}_+- \bigcup_{i=0}^r H_i,$$
where $H_i$ denotes the hyperplane 
$$H_i=\{\alpha\in \R^{r+1}; \psi_i(\alpha)=1\}.$$ 
We shall show that hyperplanes $H_0, \dots, H_r$
correspond to {\it phase transitions in homology}; in other words these hyperplanes separate areas 
where different geometric and topological properties are satisfied for large $n$, with high probability. 
The intersections of the domains $\D_k$ with the plane $\alpha_0=\alpha_3=\dots=0$ is shown on the Figure \ref{domains}.

 \begin{figure}[h]
\centering
\includegraphics[width=0.65\textwidth]{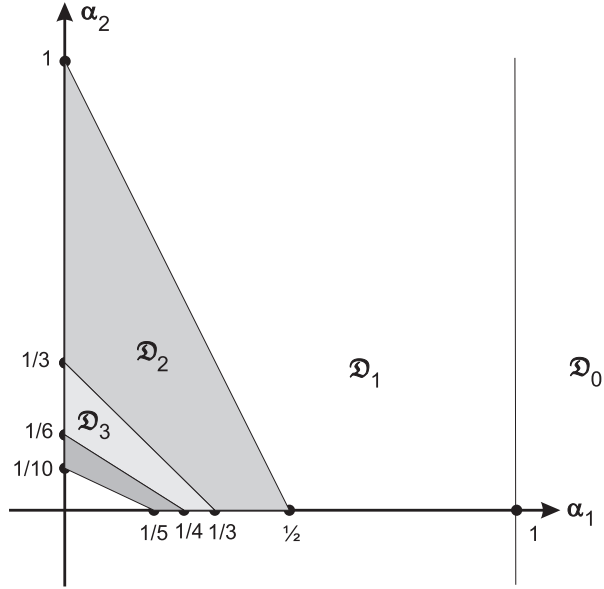}
\caption{Intersections of the domains $\D_k$ with the plane $\alpha_0=\alpha_3=\dots=0$. }\label{domains}
\end{figure}

Like in our previous publications \cite{CF14}, \cite{CF15}, \cite{CF15a}, we shall consider here the multi-parameter random simplicial complexes with the multi-parameters $p_i=n^{-\alpha_i}$ where 
in general the vector  of exponents
$$\alpha \, =\, (\alpha_0, \dots, \alpha_r)\in \R_+^{r+1}$$ is a function of $n$, i.e. $\alpha=\alpha(n)$. In this paper 
we shall additionally assume that the vector of exponents
$\alpha=\alpha(n)$ is either constant or more generally has a limit $$\alpha_\ast=\lim \alpha(n).$$

\begin{definition}\label{critical} Under the above assumptions we shall say that  the critical dimension of random simplicial complex $Y\in\Omega_n^r$ 
equals $k$ if 
\begin{eqnarray}\label{crit}
\alpha_\ast\in \D_k,\quad \mbox{where}\quad k=-1, 0, \dots, r.\end{eqnarray}
\end{definition}

\begin{example} {\rm Assume that the vector of exponents $\alpha$ is constant and $\alpha\in \D_{-1}$. Then $p_0=n^{-\alpha_0}$ with $\alpha_0>1$. 
We know that in this case the random complex $Y$ is empty, $Y=\emptyset$, a.a.s. 
See \cite{CF14}, Example 2.4. 
}
\end{example}

\begin{example}{\rm 
Let us now suppose that the vector of exponents $\alpha$ lies in $\D_0$ and is constant (i.e. is independent of $n$). 
Then $$\alpha_0<1<\alpha_0+\alpha_1.$$ 
By Lemma 2.7 from \cite{CF15} the number of vertices of a random complex $Y\in \Omega_n^r$  is close to  
$n^{1-\alpha_0}\to \infty$ and by Example 6.3 from \cite{CF15} the complex $Y$ is disconnected. On the other hand, 
the expected number of 
one-dimensional cycles of any lengths $k\ge 3$ in $Y$ is 
$$\le \sum_{k=3}^\infty (np_0)^kp_1^k\, = \, \sum_{k=3}^\infty n^{k(1-\alpha_0-\alpha_1)} \le 2\cdot n^{3(1-\alpha_0-\alpha_1)}\to 0.$$
Using the first moment method, we obtain that {\it  for $\alpha\in \D_0$, with probability tending to 1, 
the random complex $Y$ is a forest with many connected components. }
}
\end{example}

\begin{example}{\rm 
Suppose that the vector of exponents $\alpha$  is constant and lies in $\D_1$. 
Then $$\alpha_0+\alpha_1<1< \alpha_0+2\alpha_1+\alpha_2.$$ 
By Example 7.3 from \cite{CF15} a random complex $Y\in \Omega_{n}^r$ is connected, a.a.s. }
\end{example}

\section{The homological domination principle}

In this section we state a theorem stating that for a random complex $Y\in \Omega_n^r$,
the Betti number in the critical dimension $k$ is significantly larger than any other Betti number
 $b_j(Y)$ where $j\neq k$, a.a.s; see Theorem \ref{thmdom} below. 
 
 We show later in this paper that the reduced Betti numbers $\tilde b_j(Y)$ vanish for $j<k$, see Theorem \ref{thmgarland}.


We start by defining the following affine functions
\begin{eqnarray}\label{tau}
\tau_k(\alpha)\, =\, k+1-\sum_{i=0}^\ell \psi_i(\alpha) \, =\, \sum_{i=0}^k\left[1-\psi_i(\alpha)\right],
\end{eqnarray}
where $\alpha=(\alpha_0, \dots, \alpha_r)\in \R^{r+1},$  and $k=0,\ldots, r.$

\begin{remark} \label{rmk1} {\rm Note that for $\alpha\in \D_{-1}$ one has $\psi_0(\alpha)>1$ and hence $\psi_i(\alpha)>1$ for any $i$; therefore one has 
$\tau_i(\alpha)<0$ for any $i=0, 1, \dots, r$. }
\end{remark}

\begin{remark} \label{rmk2}{\rm If $\alpha\in \D_k$ where $k\ge 0$ then $\psi_i(\alpha)<1$ for all $i\le k$ and hence $\tau_i(\alpha)>0$ for all $i=0, \dots, k$.
Moreover, if $\alpha\in \D_k$ where $k\ge 0$ then 
\begin{eqnarray*}
0<\tau_0(\alpha) <\tau_1(\alpha)<\dots< \tau_k(\alpha)\quad \mbox{and} \quad
\tau_k(\alpha)>\tau_{k+1}(\alpha) >\dots > \tau_r(\alpha).
\end{eqnarray*}
}\end{remark}

%

\begin{theorem}\label{thmdom} Consider a multi-parameter random simplicial complex $Y\in \Omega_n^r$ with respect to the probability measure
$$\PP_{r, \p}:\Omega_n^r\to \R,\quad \mbox{where } \quad
\p = n^{-\alpha},$$ i.e. $$\p=(p_0, p_1, \dots, p_r), \qquad p_i=n^{-\alpha_i}.$$
Here $\alpha=\alpha(n)$ is a function of $n$. We assume that the limit $\lim \alpha(n) =\alpha_\ast\in \R^{r+1}_+$ exists and 
\begin{eqnarray}\label{exists}
\alpha_\ast\in \D_k, 
\end{eqnarray}
for some $k=0, 1, \dots, r,$ i.e. $k$ is the critical dimension, see Definition \ref{critical}. 
Then:
%
%
\begin{enumerate}
\item For a sequence $t\to 0$, a random complex $Y\in \Omega_n^r$ satisfies
\begin{eqnarray}\label{firstly}
(1-t)\cdot\frac {n^{\tau_k(\alpha)}}{(k+1)!}\, \leq\,  b_k(Y)\, \leq\, (1+t)\cdot \frac {n^{\tau_k(\alpha)}}{(k+1)!},\quad a.a.s.
\end{eqnarray}

\item For any $j\not= k$ with probability tending to one,
\begin{eqnarray}\label{dombeta1}
b_j(Y) \le (1+o(1)) \cdot (r+1)!\cdot n^{-e(\alpha)} \cdot b_k(Y), 
\end{eqnarray}
where
\begin{eqnarray}\label{ealpha}
e(\alpha) &=& \min_{0\leq s\leq r} \{|1-\psi_s(\alpha)|\}\, \nonumber\\
&=&\, \min\{1-\psi_k(\alpha), \psi_{k+1}(\alpha)-1\}.
\end{eqnarray}
\end{enumerate}
\end{theorem}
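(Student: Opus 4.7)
The plan is to combine expected face-number calculations with simultaneous concentration of the $f_j(Y)$'s, the weak Morse inequality $b_j(Y)\leq f_j(Y)$, and the Euler--Poincar\'e identity $\chi(Y)=\sum_j(-1)^jf_j(Y)=\sum_j(-1)^jb_j(Y)$. The starting point is the formula
\[
\E[f_j(Y)] \,=\, \binom{n}{j+1}\prod_{i=0}^{j}p_i^{\binom{j+1}{i+1}},
\]
coming from the fact that a $j$-simplex lies in $Y$ iff each of its $\binom{j+1}{i+1}$ sub-simplices of dimension $i$ is present and the top simplex itself is selected. Substituting $p_i=n^{-\alpha_i}$ and applying the hockey-stick identity $\sum_{m=0}^{j}\binom{m}{i}=\binom{j+1}{i+1}$ rewrites the exponent of $n$ as $\sum_{m=0}^{j}\psi_m(\alpha)$, giving $\E[f_j(Y)]=(1+o(1))n^{\tau_j(\alpha)}/(j+1)!$. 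A second moment computation, along the lines of those already carried out in \cite{CF14,CF15}, then yields $f_j(Y)=(1+o(1))\E[f_j(Y)]$ a.a.s., simultaneously for $j=0,\dots,r$ by a union bound.

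I next use that on $\D_k$ the sequence $\tau_j(\alpha)$ is strictly unimodal with unique maximum at $j=k$ (Remark \ref{rmk2}): since $\tau_k-\tau_{k-1}=1-\psi_k(\alpha)$ and $\tau_k-\tau_{k+1}=\psi_{k+1}(\alpha)-1$, and the $\psi_i(\alpha)$'s are monotone increasing in $i$, for every $j\neq k$ one gets $\tau_k(\alpha)-\tau_j(\alpha)\geq e(\alpha)>0$. Hence $\E[f_j(Y)]/\E[f_k(Y)]=O(n^{-e(\alpha)})$ for $j\neq k$. For assertion (1), the upper bound follows at once from $b_k(Y)\leq f_k(Y)$ and concentration. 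For the lower bound I write
\[
|b_k(Y)-f_k(Y)|\;\leq\;\bigl|(-1)^k\chi(Y)-f_k(Y)\bigr|+\sum_{j\neq k}b_j(Y)\;\leq\;2\sum_{j\neq k}f_j(Y),
\]
using the Euler identity on the first piece and $b_j\leq f_j$ on the second. By the previous two paragraphs the right-hand side is $O(n^{-e(\alpha)})f_k(Y)=o(f_k(Y))$, so $b_k(Y)=(1+o(1))n^{\tau_k(\alpha)}/(k+1)!$, which is exactly (\ref{firstly}).

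Assertion (2) then follows immediately: for $j\neq k$ one has $b_j(Y)\leq f_j(Y)\leq (1+o(1))n^{\tau_j(\alpha)}/(j+1)!$, and using $\tau_j(\alpha)\leq\tau_k(\alpha)-e(\alpha)$ together with $(k+1)!/(j+1)!\leq(r+1)!$ gives $b_j(Y)\leq(1+o(1))(r+1)!\,n^{-e(\alpha)}\cdot n^{\tau_k(\alpha)}/(k+1)!$; replacing the last factor by $b_k(Y)$ via part (1) produces (\ref{dombeta1}). The principal technical obstacle is the sharp simultaneous concentration of $f_j(Y)$ in the first step — the variance bound requires careful handling of pairs of simplices sharing a common subset of vertices, which is where the multi-parameter dependence structure of the model is felt; once that estimate is in hand the remainder of the argument is a short piece of Euler characteristic bookkeeping.
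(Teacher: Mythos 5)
Your overall strategy is the same as the paper's: compute $\E(f_j)$, show it equals $(1+o(1))\,n^{\tau_j(\alpha)}/(j+1)!$ via the identity $\sum_{i=0}^{j}\binom{j+1}{i+1}\alpha_i=\sum_{m=0}^{j}\psi_m(\alpha)$, prove concentration by a second-moment argument, exploit the unimodality of $\tau_j$ on $\D_k$ to get $\tau_k(\alpha)-\tau_j(\alpha)\ge e(\alpha)$, and then convert face-number estimates into Betti-number estimates. This is exactly Theorem \ref{thmdim} followed by \S\ref{prfdom}. The one genuine variation is your lower bound on $b_k$: you use the Euler--Poincar\'e identity to get $|b_k-f_k|\le 2\sum_{j\ne k}f_j$, whereas the paper uses the three-term Morse inequality $b_k\ge f_k-f_{k-1}-f_{k+1}$, which only requires controlling the two adjacent face numbers. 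Both work; your version needs upper bounds on all $f_j$, $j\ne k$, simultaneously, which you need anyway for assertion (2), so nothing is lost.

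There is, however, one step that fails as written. You assert that $f_j(Y)=(1+o(1))\E[f_j(Y)]$ a.a.s.\ simultaneously for all $j=0,\dots,r$. This multiplicative concentration is false for indices $j$ with $\tau_j(\alpha_*)\le 0$: if $\E(f_j)$ is bounded (e.g.\ $\tau_j(\alpha_*)=0$ with $\tau_j(\alpha)\to 0$), then $f_j$ behaves like a Poisson-type variable and $f_j/\E(f_j)$ does not concentrate at $1$; if $\E(f_j)\to 0$ then $f_j=0$ a.a.s.\ and the ratio is $0$, not $1+o(1)$. The Chebyshev argument you invoke requires $\E(f_j)\to\infty$, i.e.\ $\tau_j(\alpha_*)>0$. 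Since all you actually need for such $j$ is an \emph{upper} bound $f_j\le n^{\tau_k(\alpha)-e(\alpha)}$, the repair is exactly what the paper does in \S\ref{prfdom}: invoke parts (a) and (b) of Theorem \ref{thmdim} (first-moment/Markov bounds giving $f_j=0$ or $f_j<n^\mu$ for arbitrarily small $\mu>0$) and check that $\tau_k(\alpha_*)-e(\alpha_*)>0$ so that $n^\mu$ is still dominated by $n^{\tau_k(\alpha)-e(\alpha)}$. With that case distinction inserted, your Euler-characteristic bookkeeping and the derivation of (\ref{dombeta1}) go through as claimed.
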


Note that $$\lim e(\alpha) =e(\alpha_\ast) >0.$$ Observe that (\ref{dombeta1}) implies that for $j\not=k$ one has 
$$b_j(Y) \le 2 (r+1)!\cdot n^{-e(\alpha_\ast)/2} \cdot b_k(Y),$$
a.a.s.
Formula (\ref{dombeta1}) shows that for $\alpha_\ast \in {\mathfrak D}_k$ the $k$-th Betti number dominates all other Betti numbers.

The proof of Theorems \ref{thmdom} is given in \S \ref{prfdom}.

\section{Face numbers of random complexes}

Consider the number $f_d(Y)$ of $d$-dimensional simplexes of $Y$ where  $Y\in \Omega_n^r$ is a random simplicial complex, $0\le d\le r$. 
The function 
$f_d: \Omega_n^r\to \R$ is a random variable and in this section we show that for large $n$ the value of this function is very close to its expectation 
on a large set of random simplicial complexes. 



\begin{theorem}\label{thmdim}
Let $Y\in \Omega_n^r$ be a multi-parameter random simplicial complex with multi-parameter 
$\p=(p_0, p_1, \dots, p_r),\quad p_i=n^{-\alpha_i},$ 
where in general $\alpha_i=\alpha_i(n)\geq 0$ is a function of $n$. 
Assume that the limit 
\begin{eqnarray}\label{limit}
\alpha_*=\lim_{n\to\infty}\alpha(n)
\end{eqnarray}
exists. Then the following statements are true:

{\rm (a)} If for an integer $0\leq d\leq r$ one has\footnote{Recall that the function $\tau_d(\alpha)$ is defined by (\ref{tau}).} 
$\tau_d(\alpha_*)<0$ then $$f_d\equiv0,\quad a.a.s.$$ 
 In other words, in this case $\dim Y<d$, a.a.s.

{\rm (b)} If for an integer $0\leq d\leq r$ one has  $\tau_d(\alpha_*)=0$ then for any real $\mu>0$ one has 
$$f_d(Y) < n^\mu, \quad \mbox{a.a.s.}$$

{\rm (c)} If for an integer $0\leq d\leq r$ one has 
\begin{eqnarray}\label{pos} \tau_d(\alpha_*)>0
\end{eqnarray} 
then for any sequence\footnote{In particular, one may take $t=\log n \cdot n^{-\delta_d(\alpha_\ast)/4}$.} 
$t=t(n)$ such that $tn^{\delta_d(\alpha_\ast)/4}\to \infty$, where\footnote{Recall that $\tau_d(\alpha_\ast)>0$ implies $\tau_0(\alpha_\ast)>0$, see Remark 
\ref{rmk2}.}
\begin{eqnarray}\label{delta}
\delta_d(\alpha_*)=\min \left\{\tau_0(\alpha_*),\tau_d(\alpha_*) \right\}>0,
\end{eqnarray}
the $\PP_{r,\p}$-probability that a random complex $Y\in \Omega_n^r$ satisfies
\begin{eqnarray}\label{ineq17}
\left(1-t\right)\cdot \frac{n^{\tau_d(\alpha)}}{(d+1)!}\leq f_d(Y) \leq (1+t)\frac{n^{\tau_d(\alpha)}}{(d+1)!}
\end{eqnarray}
tends to $1$ as $n\to \infty$.
\end{theorem}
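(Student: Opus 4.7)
The plan is to combine the first-moment (Markov) and second-moment (Chebyshev) methods, with the hockey-stick identity $\sum_{i=0}^d \binom{i}{j}=\binom{d+1}{j+1}$ doing the combinatorial accounting. For a fixed $(d+1)$-element subset $\sigma\subset\{1,\dots,n\}$, the event $\{\sigma\in Y\}$ requires every $i$-face of $\sigma$ for $i=0,\dots,d$ to be included, and since inclusion decisions for simplices with distinct supports are conditionally independent,
\begin{equation*}
\PP_{r,\p}(\sigma\in Y)\, =\, \prod_{i=0}^d p_i^{\binom{d+1}{i+1}}\, =\, n^{-\sum_{i=0}^d \binom{d+1}{i+1}\alpha_i}.
\end{equation*}
The hockey-stick identity rewrites this exponent as $\sum_{i=0}^d\psi_i(\alpha)$, so summing over $\binom{n}{d+1}$ subsets yields
\begin{equation*}
\E[f_d(Y)]\, =\, \binom{n}{d+1}\cdot n^{-\sum_{i=0}^d\psi_i(\alpha)}\, =\, (1+O(1/n))\cdot \frac{n^{\tau_d(\alpha)}}{(d+1)!}.
\end{equation*}

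Parts (a) and (b) then follow directly from this expectation by Markov's inequality, using the continuity of $\tau_d$ and the hypothesis $\alpha(n)\to\alpha_\ast$ to pass from $\tau_d(\alpha_\ast)$ to $\tau_d(\alpha(n))$. In (a), $\tau_d(\alpha_\ast)<0$ gives $\E[f_d]\to 0$, and integrality of $f_d$ forces $\PP(f_d\ne 0)\to 0$. In (b), for any $\mu>0$ one has $\E[f_d]\le n^{\mu/2}$ for $n$ large, whence $\PP(f_d\ge n^\mu)\le n^{-\mu/2}\to 0$.

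For (c) the strategy is Chebyshev, so I need a sharp bound on the variance. Writing $f_d=\sum_\sigma I_\sigma$ and decomposing pairs $(\sigma,\sigma')$ according to $|\sigma\cap\sigma'|=j+1$, disjoint pairs ($j=-1$) are independent and contribute zero to $\mathrm{Var}[f_d]$, while for $0\le j\le d-1$ the subcomplex $\sigma\cup\sigma'$ has $2\binom{d+1}{i+1}-\binom{j+1}{i+1}$ faces of dimension $i$, giving
\begin{equation*}
\E[I_\sigma I_{\sigma'}]\, =\, \prod_{i=0}^d p_i^{2\binom{d+1}{i+1}-\binom{j+1}{i+1}}.
\end{equation*}
Multiplying by the $\Theta(n^{2d-j+1})$ ordered pairs sharing a $j$-face and applying the hockey-stick identity a second time (now to $\sum_{i=0}^j\psi_i(\alpha)$), I would show that the contribution of $j$-overlapping pairs to $\mathrm{Var}[f_d]$ is $O(n^{-\tau_j(\alpha)})\cdot\E[f_d]^2$. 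By Remark \ref{rmk2}, the sequence $\tau_0(\alpha_\ast),\dots,\tau_r(\alpha_\ast)$ is unimodal, so $\min_{0\le j\le d-1}\tau_j(\alpha_\ast)\ge\delta_d(\alpha_\ast)$, yielding
\begin{equation*}
\frac{\mathrm{Var}[f_d]}{\E[f_d]^2}\, \le\, \frac{1}{\E[f_d]}+O\bigl(n^{-\delta_d(\alpha_\ast)/2}\bigr).
\end{equation*}
Chebyshev then gives $\PP(|f_d-\E[f_d]|\ge t\E[f_d])=O\bigl(t^{-2}n^{-\delta_d(\alpha_\ast)/2}\bigr)$, which vanishes under the hypothesis $tn^{\delta_d(\alpha_\ast)/4}\to\infty$. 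The main obstacle is precisely this variance bookkeeping: one must verify, via two applications of the hockey-stick identity, that the $n$-exponent from counting pairs combines with the exponent from the joint probability to produce exactly the clean factor $n^{-\tau_j(\alpha)}$ relative to $\E[f_d]^2$, and then invoke the unimodality of $j\mapsto\tau_j$ from Remark \ref{rmk2} to reduce the maximum over $j\in\{0,\dots,d-1\}$ to $\delta_d(\alpha_\ast)$.
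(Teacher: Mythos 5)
Your proposal is correct and follows essentially the same route as the paper: the identity $\sum_{i=0}^d\binom{d+1}{i+1}\alpha_i=\sum_{k=0}^d\psi_k(\alpha)$ to express $\E(f_d)$ via $\tau_d$, Markov for (a) and (b), and for (c) the second-moment decomposition of pairs by overlap size, which after the same identity yields $\mathrm{Var}(f_d)/\E(f_d)^2=O\bigl(\sum_{\ell\le d}n^{-\tau_\ell(\alpha)}\bigr)$, reduced to $O(n^{-\delta_d(\alpha_\ast)/2})$ by the concavity of $\ell\mapsto\tau_\ell$ from Remark \ref{rmk2} and then fed into Chebyshev. The only detail left implicit is converting the two-sided bound around $\E(f_d)$ into the stated bound around $n^{\tau_d(\alpha)}/(d+1)!$, which requires absorbing the $1+O(1/n)$ factor into $t$ using $t\gg n^{-1/4}$, exactly as the paper does.
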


\begin{remark}{ \rm  It follows from Theorem \ref{thmdim} that 
$$\frac{\log f_d(Y)}{\log n} \sim \tau_d(\alpha_\ast), \quad \mbox{a.a.s.}$$

}
\end{remark}

\begin{proof}[Proof of Theorem \ref{thmdim}]
For an integer $0\leq d\leq r$, let $\sigma\subset \Delta_n^{(r)}$ be a fixed $d$-dimensional simplex.
By Lemma 2.9 in \cite{CF15} one has
\begin{eqnarray}\label{simplex}
\PP_{r, \p}(\sigma\subset Y)=\prod_{i=0}^{d}p_i^{f_i(\sigma)}=n^{-\sum_{i=0}^d \binom{d+1}{i+1}\alpha_i}.
\end{eqnarray}
We shall use the identity 
\begin{eqnarray}\label{sumpsi1}\sum_{i=0}^d\binom{d+1}{i+1}\alpha_i=\sum_{k=0}^d\psi_k(\alpha),\quad \mbox{where} \quad 0\leq d\leq r,
\end{eqnarray}
which is equivalent to the well-known identity 
$\binom {d+1} {i+1} = \sum_{k=i}^d \binom k i.$
The LHS of this equality counts the number of $(i+1)$-element subsets of the set $\{1, \dots, d+1\}$; for each $k=i, \dots, d$ the number 
$\binom k i$ counts the number of $(i+1)$-element subsets of the set $\{1, \dots, d+1\}$ for which $k+1$ is the maximal element. 

By the definition of the affine function $\tau_d(\alpha)$ we have $\sum_{k=0}^d\psi_k(\alpha)=d+1-\tau_d(\alpha)$ and therefore (\ref{simplex}) can be rewritten as 
$\PP_{r, \p}(\sigma\subset Y)=n^{\tau_d(\alpha)-(d+1)}$ and hence 
\begin{eqnarray}\label{efd}
\E(f_d) = \, {\binom n {d+1}} \cdot \prod_{i+0}^{d+1} p_i^{\binom {d+1}{i+1}} \, =\, \binom n {d+1} n^{\tau_d(\alpha)-(d+1)}.
\end{eqnarray}
Thus we obtain
\begin{eqnarray}\label{ex1}
\left(1-\frac{d^2}{n}\right) \cdot\frac{n^{\tau_d(\alpha)}}{(d+1)!}\leq \E(f_d)\leq \frac{n^{\tau_d(\alpha)}}{(d+1)!}
\end{eqnarray}
for $n$ large enough. 


To prove statement (a), assume that 
$\lim_{n\to\infty}\tau_d(\alpha)=\tau_d(\alpha_*)<0$ is negative. 
It follows that for $n$ large enough we have $\tau_d(\alpha)<\tau_d(\alpha_\ast)/2<0$ and hence
$$\E(f_d)\leq \frac{n^{\tau_d(\alpha_*)/2}}{(d+1)!}.$$ 
Thus we see that $\E(f_d)\to 0$ as $n\to\infty$ and by the Markov inequality we obtain that 
$f_d(Y)=0$,
asymptotically almost surely. 

To prove statement (b), we assume that $\tau_d(\alpha_\ast)=0$ and observe that for any $\mu>0$ one has 
$\tau_d(\alpha)<\mu/2$ for all $n$ large enough. Then using the Markov inequality we find
$$\PP_{r, \p}(f_d(Y) \ge n^\mu) \le \frac{\E(f_d)}{n^\mu} \, \le\,  \frac{n^{\tau_d(\alpha)-\mu}}{(d+1)!}\, \le\,  \frac{n^{-\mu/2}}{(d+1)!}\, =\, o(1).$$

Next we prove statement (c) assuming that $\tau_d(\alpha_*)>0$. From (\ref{ex1}) we obtain 
\begin{align}
\E(f_d)\geq \left(1-\frac{d^2}{n}\right)\cdot \frac{n^{\tau_d(\alpha_*)/2}}{(d+1)!}\, \to\, \infty
\end{align}
as $n\to\infty$.
We shall show below that for $n$ large enough one has
\begin{align}\label{var1}
\frac{{\rm Var}(f_d)}{\E(f_d)^2}\leq C\cdot n^{-\delta_d(\alpha_*)/2}
\end{align}
where $C$ is a constant determined by $d$ and $\delta_d(\alpha_*)$ is given by (\ref{delta}). 
Let $t=\omega n^{-\delta_d(\alpha_\ast)/4}$ where $\omega \to \infty$.
Applying the Chebychev inequality we obtain 
 $$\PP_{r,\p}\left( |f_d-\E(f_d)|\geq t\cdot\E(f_d) \right)\, \leq\,  \frac{{\rm Var}(f_d)}{t^2\cdot\E(f_d)^2}\, \leq\,  \frac{C}{t^2 n^{\delta_d(\alpha_*)/2}}=o(1).$$
Hence for any sequence $t=t(n)$ as above we shall have
$$(1-t)\cdot\E(f_d)\, \leq\,  f_d\, \leq\,  (1+t)\cdot\E(f_d),\quad \mbox{a.a.s.}$$
which together with (\ref{ex1}) imply 
$$
\left(1-t\right)\cdot\left(1-\frac{d^2}{n}\right)\cdot \frac{n^{\tau_d(\alpha)}}{(d+1)!}\leq f_d \leq (1+t)\frac{n^{\tau_d(\alpha)}}{(d+1)!}, \, \quad \mbox{a.a.s.}.
$$
Next we observe that $\delta_d(\alpha_\ast)\le 1$ and therefore $tn^{1/4}\to \infty$. This implies the inequality $(1-t)(1-\frac{d^2}{n}) \ge (1-2t)$ for $n$ large enough. This gives the left inequality in 
(\ref{ineq17}) by applying the whole argument to $t/2$ instead of $t$. 
%

We are left to prove the inequality (\ref{var1}).
Given a $d$-dimensional simplex $\sigma\subset \{1,\ldots,n\}$ we denote by $A_\sigma$ the event that $\sigma$ is a simplex of 
$Y\in \Omega_n^r$. Given two $d$-simplexes $\sigma,\, \tau\subset\{1,\ldots,n\}$ we denote by $A_\sigma\wedge A_\tau$ the event 
$\sigma\cup\tau\subset Y$.
We have (see Lemma 2.2 from \cite{CF15})
$$\PP_{r,\p}(A_\sigma\wedge A_\tau)=\prod_{i=0}^d p_i^{2\binom{d+1}{i+1}-\binom{f_0(\sigma\cap\tau)}{i+1}}$$
where $f_0(\sigma\cap\tau)$ denotes the number of vertices common to $\sigma$ and $\tau$. 

We obtain
\begin{eqnarray*}
\E(f_d^2)&=&\sum_{\sigma^{(d)}, \tau^{(d)}}\PP_{r, \p}(A_\sigma\wedge A_\tau)\\
&=& \sum_{j=0}^{d+1} \binom n {d+1} \cdot \binom {d+1} j\cdot \binom {n-d-1}{d+1-j} 
\cdot \prod_{i=0} ^{d+1} p_i^{2{\binom {d+1} {i+1}}- \binom j {i+1}}\\
\end{eqnarray*}
Using (\ref{efd}) we find
\begin{eqnarray*}
\frac{\E(f_d^2)}{\E(f_d)^2} &=& {\binom n {d+1}}^{-1} \cdot \left[ \binom {n-d-1} {d+1} + \sum_{j=1}^{d+1} \binom {d+1} j\cdot \binom {n-d-1}{d+1-j}\cdot \prod_{i=0}^{j-1} p_i^{-\binom j {i+1}}\right]\\
&=& {\binom n {d+1}}^{-1} \cdot  \left[\binom {n-d-1} {d+1} + \sum_{j=1}^{d+1} \binom {d+1} j\cdot \binom {n-d-1}{d+1-j}\cdot n^{\sum_{i=0}^{j-1}\binom j {i+1} \alpha_i}\right]\\
&=& {\binom n {d+1}}^{-1} \cdot \left[\binom {n-d-1} {d+1} +
\sum_{j=1}^{d+1} \binom {d+1} j\cdot \binom {n-d-1}{d+1-j}\cdot n^{\sum_{k=0}^{j-1}\psi_k(\alpha)}\right]\\
&\le & 1 + {\binom n {d+1}}^{-1}\cdot \sum_{j=1}^{d+1} \binom {d+1} j\cdot \binom {n-d-1}{d+1-j}\cdot n^{\sum_{k=0}^{j-1}\psi_k(\alpha)}.
\end{eqnarray*}
On the third step we have used formula (\ref{sumpsi1}). 
Next we observe that 
$${\binom n {d+1}}^{-1}\cdot \binom {d+1} j\cdot \binom {n-d-1}{d+1-j}\le C_d\cdot n^{-j}$$
where $C_d$ is a constant depending on $d$ but not on $n$. Hence we may write
\begin{eqnarray*}
\frac{\E(f_d^2)}{\E(f_d)^2} &\le & 1+ C_d\cdot \sum_{j=1}^{d+1} n^{-j+\sum_{k=0}^{j-1} \psi_k(\alpha)}\\
&=& 1+ C_d\cdot \sum_{\ell=0}^d n^{-\tau_\ell(\alpha)}.
\end{eqnarray*}
Using the convexity properties of $\tau_\ell(\alpha)$ (see Remark \ref{rmk2}) one has (for any $n$)
\begin{eqnarray*}
\min_{\ell \le d} \tau_\ell (\alpha) &=& \min\{ \tau_0(\alpha), \tau_d(\alpha)\} \\
&\ge & \frac{1}{2} \min\{ \tau_0(\alpha_\ast), \tau_d(\alpha_\ast)\} = \frac{1}{2} \delta_d(\alpha_\ast)>0. 
\end{eqnarray*}
Thus we obtain 
$$
\frac{{\rm Var}(f_d)}{\E(f_d)^2} = \frac{\E(f_d^2)}{\E(f_d)^2} -1 \le C'_d n^{-\delta_d(\alpha_\ast)/2}$$
which implies (\ref{var1}). Here $C_d'=(d+1)C_d$ is a constant depending on $d$. 
This completes the proof of Theorem \ref{thmdim}. 
\end{proof}

\section{Proof of Theorem \ref{thmdom}.}\label{prfdom}

Every simplicial complex $Y\in \Omega_n^r$ satisfies the Morse inequalities
\begin{eqnarray}\label{ineqs1}
f_j(Y)-f_{j+1}(Y)-f_{j-1}(Y)\, \leq\,  b_j(Y) \, \leq\,  f_j(Y),
\end{eqnarray}
where $0\leq j\leq r$. We shall use (\ref{ineqs1}) together with Theorem \ref{thmdim} to obtain information about the Betti numbers of $Y$.

We start with the following three observations: 

(1) If $\tau_j(\alpha_\ast)<0$ then $f_j(Y)=0$ and hence $b_j(Y)=0$, a.a.s. (see Theorem \ref{thmdim}, part (a)). 

(2) If $\tau_j(\alpha_\ast)=0$ then for any $\mu>0$ one has $$b_j\, (Y) \le\,  f_j(Y)\, \le\,  n^\mu,$$ a.a.s. (see Theorem \ref{thmdim}, part (b)). 

(3) Let us assume that $\tau_j(\alpha_\ast)>0$ and 
let $t=t(n)$ be any sequence of real numbers satisfying $$tn^{\delta_j(\alpha_\ast)/4}\to \infty$$ where $\delta_j(\alpha_\ast)= 
\min\{\tau_0(\alpha_\ast), \tau_j(\alpha_\ast)\}>0.$ Then by Theorem \ref{thmdim}, part (c) we have 
\begin{eqnarray*}
b_j(Y) \, \le \, f_j(Y)\, \le (1+t)\cdot \frac{n^{\tau_j(\alpha)}}{(j+1)!},\, 
\quad \mbox{a.a.s.}
\end{eqnarray*}

Now, suppose that $k\geq 0$ is such that the vector $\alpha_*\in\D_k$ lies in the domain $\D_k$; in other words, we assume that $k$ 
is the critical dimension of the random complex $Y$. Then $\tau_k(\alpha_\ast)>0$ and as above we have 
$$b_k(Y) \, \le \, (1+t)\cdot \frac{n^{\tau_k(\alpha)}}{(k+1)!},\quad \mbox{a.a.s.}$$
for a sequence $t=t(n)$ tending to $0$. 
Besides, since $\tau_{k-1}(\alpha_\ast)>0$ we have 
\begin{eqnarray*}f_{k-1}(Y) \, &\le& \, (1+t')\cdot \frac{n^{\tau_{k-1}(\alpha)}}{k!} \\
&\le& (1+t') \cdot \frac{n^{\tau_k(\alpha)-e(\alpha)}}{k!}
\end{eqnarray*}
a.a.s. for a sequence $t'\to 0$. Similarly, if $\tau_{k+1}(\alpha_\ast)>0$ we similarly obtain 
\begin{eqnarray*}f_{k+1}(Y) 
\, \le \, (1+t'') \cdot \frac{n^{\tau_k(\alpha)-e(\alpha)}}{(k+2)!}
\end{eqnarray*}
a.a.s. for a sequence $t''\to 0$. The last inequality still holds in the case when $\tau_{k+1}(\alpha_\ast)\le 0$ by statements (a) and (b) of Theorem \ref{thmdim} since $\tau_k(\alpha)-e(\alpha)$ converges to $\tau_k(\alpha_\ast) - e(\alpha_\ast)>0$. To explain the inequality 
$\tau_k(\alpha_\ast) - e(\alpha_\ast)>0$ note that in the case $\tau_{k+1}(\alpha_\ast)\le 0$ we have 
$\tau_{k+1}(\alpha_\ast) < \tau_{k-1}(\alpha_\ast)$, and hence $\tau_k(\alpha_\ast) -e(\alpha_\ast)= \tau_{k-1}(\alpha_\ast)>0$.

We see that 
\begin{eqnarray*}f_{k+1}(Y) +f_{k-1}(Y) &\le& (1+t')\frac{n^{\tau_k(\alpha)-e(\alpha)}}{k!} +(1+t'')\frac{n^{\tau_k(\alpha)-e(\alpha)}}{(k+2)!}\\
&\le& n^{\tau_k(\alpha)-e(\alpha)}\frac{r+2}{(k+1)!},
\end{eqnarray*}
a.a.s. Finally, using (\ref{ineqs1}) we obtain
\begin{eqnarray}
b_k(Y) &\ge& \frac{n^{\tau_k(\alpha)}}{(k+1)!} \cdot \left[ 
1-t -(r+2)n^{-e(\alpha)}\right]\nonumber\\
&\ge& \frac{n^{\tau_k(\alpha)}}{(k+1)!} (1-o(1)),
\end{eqnarray}
a.a.s.
This proves inequality (\ref{firstly}). 

Finally to prove (\ref{dombeta1}) we observe that for $j\not=k$ we have (assuming that $\tau_j(\alpha_\ast)>0$) 
\begin{eqnarray*}
\frac{b_j(Y)}{b_k(Y)} &\le& \frac{f_j(Y)}{b_k(Y)} \\ 
&\le& \frac{(1+t)}{1-t'} \cdot \frac{(k+1)!}{(j+1)!} \cdot n^{\tau_j(\alpha)-\tau_k(\alpha)}\\
&\le&  (1+o(1))\cdot (r+1)!\cdot n^{-e(\alpha)},
\end{eqnarray*}
a.a.s., which is equivalent to (\ref{dombeta1}). Here $t, t'$ are sequences tending to $0$. In the case when $\tau_j(\alpha_\ast)\le 0$ we apply statement (b) of Theorem \ref{thmdim} taking $\mu$ such that $0\le \mu \le \tau_k(\alpha_\ast)-e(\alpha_\ast)$. Then 
\begin{eqnarray*}
\frac{b_j(Y)}{b_k(Y)} &\le& \frac{f_j(Y)}{b_k(Y)} \\ 
&\le & \frac{(k+1)!}{(1-t)}\cdot n^{\mu-\tau_k(\alpha)} \\
&\le&  (1+o(1))\cdot (r+1)!\cdot n^{-e(\alpha)}, 
\end{eqnarray*}
a.a.s., as above. This completes the proof.
\qed

\vskip 2cm

\section{Degrees of faces above and below the critical dimension in random complexes}


It this section we describe degrees of faces in random simplicial complexes. We show that their behaviour is very different depending on whether we are above or below the critical dimension. In dimensions $d$ below the critical dimension most $d$-dimensional faces have degree very close to $n^{1-\psi}$ with 
the exponent $\psi\in(0,1)$ depending on $d$ and $n$, while in dimensions above or at the critical dimension most faces have degree 0, i.e. are isolated.

Recall that {\it the degree} of a $d$-dimensional simplex $\sigma$ in a simplicial complex $Y$ is defined as the number of $(d+1)$-dimensional simplices in $Y$ that contain $\sigma$. We denote this number by $\deg_Y(\sigma)$. 

Given integers $d\ge 0$ and $s\ge 0$, we denote by $f_{d, s}(Y)$ the number of $d$-dimensional simplexes of $Y$ having degree $s$; note that 
$f_d(Y) = \sum_{s\ge 0} f_{d, s}(Y).$

If $Y\in \Omega_n^r$ is a random simplicial complex then 
$$f_{d, s}: \Omega_n^r\to \R$$
is a random variable which we shall  study in this section. 

\begin{lemma}\label{lm8} For any $d\le r$ and any $s\ge 0$ the expectation of $f_{d, s}$ is given by the formula
\begin{eqnarray}\label{efds}
\E(f_{d, s}) = \Lambda_{d, s}(\alpha) \cdot \E(f_d),
\end{eqnarray}
where 

\begin{eqnarray}\label{lambda}
\Lambda_{d, s}(\alpha) &=& \binom {n-d-1}{s} \cdot \lambda^s \cdot (1-\lambda)^{n-d-s-1},\\ \nonumber \\
 \lambda&=&n^{-\psi_{d+1}(\alpha)}
\end{eqnarray}
and $\E(f_d)$ is given by formula (\ref{efd}). 
\begin{proof}
Clearly we have
\begin{align*}
\E(f_{d,s}(Y))&=\sum_{\sigma\subset \Delta_n}  \PP_{r, \p}(\{ Y; \, \sigma\subset Y \quad \mbox{and}\quad  deg_Y(\sigma)=s\})\\
&=\binom{n}{d+1}\cdot \PP_r(\{Y; \sigma_0\subset Y \quad \mbox{and}\quad deg_Y(\sigma_0)=s\}),
\end{align*}
where in the first line $\sigma$ runs over all $d$-dimensional simplices of $\Delta_n$ and in the second line $\sigma_0$ denotes a fixed $d$-dimensional simplex. Here we use the obvious symmetry of the model. 
We restate the condition $deg_Y(\sigma_0)=s$ as $$f_0(\lk_Y(\sigma_0))=s$$ where $\lk_Y(\sigma_0)$ denotes the link of the $d$-simplex $\sigma_0$ in $Y$.

Let $\Lambda_{d,s}(\alpha)$ denote the conditional probability that a $d$-simplex $\sigma_0$ has degree $s$ in $Y$ given that $\sigma_0$ 
is contained in a random complex $Y\in\Omega_n^r$, i.e. 
$$\Lambda_{d,s}(\alpha)\, =\, \PP_{r, \p}\left(f_0(\lk_Y(\sigma_0))=s\ \, |\, \ \sigma_0\subset Y\right).$$
By the definition of conditional probability we have
$$\PP_{r, \p}\left(\{Y; \, \sigma_0\subset Y \quad \mbox{and}\quad \deg_Y(\sigma_0)=s\right\})\, =\, \PP_{r, \p}\left(\{Y; \sigma_0\subset Y\right\})\cdot \Lambda_{d,s}(\alpha).$$
We see that
$\E(f_{d,s}(Y))=\Lambda_{d,s}(\alpha)\cdot \E(f_d(Y)), $ i.e. (\ref{efds}) holds. By Lemma 3.2 of \cite{CF15} 
the link $\lk_Y(\sigma_0)$ is a multi-parameter random simplicial complex on $n-d-1$ vertices with the multi-parameter 
$(p_0', p_1', \dots, p'_{r-d-1})$ where 
$$p'_i= \prod_{j=i}^{i+d+1} p_j^{\binom {d+1}{j-i}}.$$ In particular, 
$$p_0' =\prod_{j=0}^{d+1}p_j^{\binom{d+1}{j}}=n^{-\sum_{j=0}^{d+1}\binom{d+1}{j}\alpha_j}=n^{-\psi_{d+1}(\alpha)}=\lambda.$$
This completes the proof. 
%
\end{proof}

\end{lemma}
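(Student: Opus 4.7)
The plan is to use linearity of expectation together with the symmetry of the model to reduce the problem to computing a single conditional probability, and then identify that conditional probability with a Binomial law coming from the link.

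First, I would expand
\begin{align*}
\E(f_{d,s}) = \sum_{\sigma} \PP_{r,\p}(\sigma \subset Y \text{ and } \deg_Y(\sigma)=s),
\end{align*}
where $\sigma$ ranges over all $d$-dimensional simplices of $\Delta_n$. Since $\PP_{r,\p}$ is invariant under permutations of the vertex set $\{1,\dots,n\}$, every term in the sum equals the corresponding probability for a fixed reference simplex $\sigma_0$, and there are $\binom{n}{d+1}$ such terms. Factoring the joint probability via $\PP(\sigma_0\subset Y,\ \deg_Y(\sigma_0)=s) = \PP(\sigma_0\subset Y)\cdot \Lambda_{d,s}(\alpha)$ and comparing with the formula (\ref{efd}) for $\E(f_d)$ immediately gives the desired identity $\E(f_{d,s}) = \Lambda_{d,s}(\alpha)\cdot \E(f_d)$.

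It then remains to compute $\Lambda_{d,s}(\alpha)$, the conditional probability that $\sigma_0$ has degree $s$ given that $\sigma_0\subset Y$. The key reinterpretation is that $\deg_Y(\sigma_0)=s$ is equivalent to $f_0(\lk_Y(\sigma_0))=s$. I would invoke Lemma 3.2 of \cite{CF15}, which says that, conditional on $\sigma_0\subset Y$, the link $\lk_Y(\sigma_0)$ is itself a multi-parameter random simplicial complex on the $n-d-1$ remaining vertices, with modified parameters
\begin{align*}
p_i' = \prod_{j=i}^{i+d+1} p_j^{\binom{d+1}{j-i}}.
\end{align*}
Specialising to $i=0$ and substituting $p_j=n^{-\alpha_j}$ yields $p_0' = n^{-\sum_j \binom{d+1}{j}\alpha_j} = n^{-\psi_{d+1}(\alpha)}=\lambda$.

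The last ingredient is the basic observation that in any multi-parameter random complex the number of vertices $f_0$ is binomial with parameters (number of potential vertices, vertex-probability $p_0'$); this follows from the definition (\ref{def1}) since a vertex has empty boundary and is therefore included independently with probability $p_0'$. Applied to $\lk_Y(\sigma_0)$, this gives
\begin{align*}
\Lambda_{d,s}(\alpha) = \binom{n-d-1}{s}\lambda^s(1-\lambda)^{n-d-s-1},
\end{align*}
which is exactly (\ref{lambda}). The only substantive step is the invocation of the link-is-again-multi-parameter lemma from \cite{CF15}; everything else is linearity of expectation, symmetry, and the elementary distribution of $f_0$ in the model.
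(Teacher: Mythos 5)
Your proposal is correct and follows essentially the same route as the paper: linearity of expectation plus vertex-permutation symmetry, factoring through the conditional probability $\Lambda_{d,s}(\alpha)$, reinterpreting the degree as $f_0$ of the link, and invoking Lemma 3.2 of \cite{CF15} to compute $p_0'=n^{-\psi_{d+1}(\alpha)}$. The only difference is that you spell out why $f_0$ of the link is Binomial$(n-d-1,\lambda)$ (vertices have empty boundary, hence are included independently), a step the paper leaves implicit; this is a correct and harmless elaboration.
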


\begin{corollary}\label{degree}
Let $Y\in\Omega_n^r$ be a random simplicial complex with respect to the probability multi-parameter $\p=(p_0,p_1,\ldots,p_r)$ where 
$p_i=n^{-\alpha_i}$ for $i=0,\ldots,r$ and the vector of exponents
$\alpha=\alpha(n)=(\alpha_0,\alpha_1,\ldots,\alpha_r)$
converges to a vector $\alpha_*\in\R^{r+1}_+$. Assume that $\alpha_\ast\in \D_k$ for some $k\geq 0$.
For an integer $0\leq d\leq r$ and a constant $0<\delta<1$ denote 
$$\mu=\mu(n, d, \alpha) = n^{1-\psi_{d+1}(\alpha)}$$
and
$$\mathcal{T}(n,\delta)=\{s\in \mathbb{N}\cup \{0\}; |s-\mu | > \delta\mu\}.$$
Then:

(1) Assume that $d<k$ and $\psi_{d+1}(\alpha_\ast)>0$. Then one has 
\begin{eqnarray}
\sum_{s\in \mathcal{T}(n,\delta)}f_{d,s}(Y)=0,
\end{eqnarray}
a.a.s. In other words, with probability tending to one, the degrees $s$ of all $d$-dimensional simplexes of a random complex $Y\in \Omega_n^r$ satisfy 
the inequality $|s-\mu|\le \delta \mu.$

(2) Assume now that $d\geq k$ and $\tau_d(\alpha_\ast)>0$. Then 
there exists a sequence 
 $t=t(n)> 0$ tending to $0$ such that 
\begin{eqnarray}\frac{f_{d,0}(Y)}{f_d(Y)}>1-t,\end{eqnarray}
a.a.s.
In other words, if $d$ is greater or equal than the critical dimension $k$,  \lq\lq almost all\rq\rq\,   simplexes of dimension $d$ have degree $0$ for $n\to \infty$. 
\end{corollary}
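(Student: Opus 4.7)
The plan is to reduce both parts to Lemma \ref{lm8}, which expresses $\E(f_{d,s})$ as $\Lambda_{d,s}(\alpha)\cdot\E(f_d)$, where $\Lambda_{d,s}(\alpha)$ is the probability mass at $s$ of the binomial distribution $\mathrm{Bin}(n-d-1,\lambda)$ with $\lambda=n^{-\psi_{d+1}(\alpha)}$. Its mean equals $(n-d-1)\lambda$, asymptotic to $\mu=n^{1-\psi_{d+1}(\alpha)}$, which is precisely the quantity appearing in the concentration claim of part (1).

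For part (1), since $d<k$ and $\alpha_\ast\in\D_k$, the sequence of inequalities in Remark \ref{rmk2} combined with the hypothesis $\psi_{d+1}(\alpha_\ast)>0$ gives $0<\psi_{d+1}(\alpha_\ast)<1$, so $\lambda\to 0$ while $\mu$ grows as a strictly positive power of $n$. The standard Chernoff bound for binomial tails yields
\begin{equation*}
\sum_{s\in\mathcal{T}(n,\delta)}\Lambda_{d,s}(\alpha)\, =\, \PP(|X-\mu|>\delta\mu)\, \le\, 2\exp(-c\delta^2\mu)
\end{equation*}
for some absolute $c>0$, with $X\sim\mathrm{Bin}(n-d-1,\lambda)$. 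Multiplying through by $\E(f_d)\le\binom{n}{d+1}\le n^{d+1}$ and applying Markov's inequality to the non-negative integer random variable $N(Y)=\sum_{s\in\mathcal{T}(n,\delta)}f_{d,s}(Y)$, one concludes
\begin{equation*}
\PP(N(Y)\ge 1)\, \le\, \E(N)\, \le\, 2n^{d+1}\exp(-c\delta^2\mu)\, \to\, 0,
\end{equation*}
since a stretched-exponential in $n$ defeats any polynomial factor. This gives the claim.

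For part (2), I would exploit the double-counting identity $\sum_{\sigma}\deg_Y(\sigma)=(d+2)\,f_{d+1}(Y)$, where the sum runs over the $d$-simplices of $Y$, which immediately yields the deterministic bound
\begin{equation*}
f_d(Y)-f_{d,0}(Y)\, \le\, (d+2)\cdot f_{d+1}(Y).
\end{equation*}
Since $d\ge k$ and $\alpha_\ast\in\D_k$, we have $\psi_{d+1}(\alpha_\ast)>1$, so $\tau_{d+1}(\alpha_\ast)=\tau_d(\alpha_\ast)-(\psi_{d+1}(\alpha_\ast)-1)<\tau_d(\alpha_\ast)$. Theorem \ref{thmdim}(c) applied to $f_d$ provides a sharp lower bound $f_d(Y)\gtrsim n^{\tau_d(\alpha)}/(d+1)!$ a.a.s., while $f_{d+1}(Y)$ is controlled from above by one of parts (a), (b), (c) of the same theorem, depending on the sign of $\tau_{d+1}(\alpha_\ast)$. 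In the generic regime $\tau_{d+1}(\alpha_\ast)>0$ the ratio is a.a.s.\ asymptotic to $n^{1-\psi_{d+1}(\alpha)}/(d+2)\to 0$; in the degenerate regimes $\tau_{d+1}(\alpha_\ast)\le 0$ the ratio decays strictly faster (in fact to $0$ at a polynomial rate, or is identically zero). Taking $t(n)$ to be twice the maximum of these error terms produces the required bound.

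The main obstacle is bookkeeping rather than genuine difficulty. In part (1), one must confirm that $\mu$ grows as a fixed positive power of $n$ (not merely tends to infinity) so that the Chernoff tail annihilates the $n^{d+1}$ factor coming from the first-moment sum; this is exactly what $0<\psi_{d+1}(\alpha_\ast)<1$ guarantees. In part (2), the delicate point is assembling a single explicit sequence $t(n)\to 0$ that works uniformly across the three sign regimes of $\tau_{d+1}(\alpha_\ast)$, particularly the borderline case $\tau_{d+1}(\alpha_\ast)=0$ where only the weak estimate $f_{d+1}\le n^{\mu'}$ from Theorem \ref{thmdim}(b) is available, which forces $t(n)$ to decay no faster than some $n^{-\eta}$ with $\eta$ strictly less than $\tau_d(\alpha_\ast)$.
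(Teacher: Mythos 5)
Your argument is correct. Part (1) follows the paper's proof essentially verbatim: Lemma \ref{lm8}, the Chernoff tail bound for the binomial distribution $\Lambda_{d,s}$, the crude estimate $\E(f_d)\le n^{d+1}$, and Markov's inequality, with the stretched exponential $e^{-c\delta^2\mu}$ beating the polynomial factor because $0<\psi_{d+1}(\alpha_\ast)<1$. The only bookkeeping you elide is that the binomial mean is $(n-d-1)\lambda=\mu-(d+1)\lambda$ rather than $\mu$ itself; the paper absorbs this by passing to the auxiliary set $\mathcal{T}'(n,\delta)$ centred at $\mu'$ with radius $\frac{1}{2}\delta\mu'$, and the shift is negligible since $(d+1)\lambda/\mu=(d+1)/n\to 0$. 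Part (2), by contrast, takes a genuinely different route. The paper stays with the link distribution: on the event $\{f_d\ge\E(f_d)/2\}$, which has probability tending to one by Theorem \ref{thmdim}, it applies Markov's inequality to $\sum_{s\ge1}f_{d,s}$ and uses $\sum_{s\ge 1}\Lambda_{d,s}\le\sum_{s\ge1}\mu^s\le 2\mu\to0$. You instead invoke the deterministic double count $\sum_{\sigma}\deg_Y(\sigma)=(d+2)f_{d+1}(Y)$, giving $f_d-f_{d,0}\le (d+2)f_{d+1}$, and then control $f_{d+1}/f_d$ entirely through Theorem \ref{thmdim}, splitting on the sign of $\tau_{d+1}(\alpha_\ast)$. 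This avoids $\Lambda_{d,s}$ altogether in part (2) and in the generic regime $\tau_{d+1}(\alpha_\ast)>0$ yields $t$ of order $n^{1-\psi_{d+1}(\alpha)}$, marginally sharper than the paper's $t=\omega\mu$ with $\omega\to\infty$; the price is a three-way case analysis and a weaker rate $n^{\mu'-\tau_d(\alpha)}$ in the borderline case $\tau_{d+1}(\alpha_\ast)=0$, where only Theorem \ref{thmdim}(b) is available. One small imprecision: in that borderline case the ratio does not decay \emph{faster} than in the generic regime, it decays more slowly; but, as you yourself note at the end, it still decays polynomially once $\mu'<\tau_d(\alpha_\ast)$, and since for a fixed $\alpha_\ast$ only one of the three regimes actually occurs, a single sequence $t(n)\to 0$ suffices and the conclusion stands.
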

%
%
%
%
%
%

\begin{proof} Note that in the case (1) we have $\psi_{d+1}(\alpha_\ast)<1$ implying $\mu\to \infty$ and in the case (2) 
we have $\psi_{d+1}(\alpha_\ast)>1$ implying $\mu\to 0$. 

Since $f_{d,s}(Y)$ is a non-negative integer valued random variable, the statement (1) of the lemma is equivalent to
$$\PP_{r,\p}\left( \sum_{s\in \mathcal{T}(n,\delta)}f_{d,s}(Y)\geq 1\right)\to 0,\quad \mbox{when} \quad n\to\infty.$$
For technical reasons we introduce the set 
$$\mathcal{T}'(n,\delta)=\{s\in \mathbb{N}\cup \{0\}; |s-\mu' | > \frac{1}{2} \delta\mu'\},$$
where 
$$\mu'=(n-d-1)\cdot n^{-\psi_{d+1}(\alpha)} =\mu- (d+1)n^{-\psi_{d+1}(\alpha)}.$$
One easily checks that $\T(n, \delta) \subset \T'(n, \delta)$. 
By the Markov inequality and Lemma \ref{lm8} we have
\begin{eqnarray*}
\PP_{r,\p}\left(\sum_{s\in \mathcal{T}(n,\delta)}f_{d,s}(Y)\geq 1\right)&\le& \PP_{r,\p}\left(\sum_{s\in \mathcal{T}'(n,\delta)}f_{d,s}(Y)\geq 1\right)\\
&\leq& \E\left( \sum_{s\in \mathcal{T}'(n,\delta)}f_{d,s}\right)\\
&=& \sum_{s\in \mathcal{T}'(n,\delta)}\E(f_{d,s})\\
&=&\E(f_d)\cdot \sum_{s\in \mathcal{T}'(n,\delta)}\Lambda_{d,s}(\alpha)\\
&\leq& n^{d+1}\cdot\left(\sum_{s\in \mathcal{T}'(n,\delta)}\Lambda_{d,s}(\alpha)\right).
\end{eqnarray*}
Recall that $\Lambda_{d,s}(\alpha)$ is binomially distributed with $n-d-1$ trials and with the probability parameter $\lambda=n^{-\psi_{d+1}(\alpha)}$, see 
(\ref{lambda}).
By the well-known Chernoff bound (see \cite{JLR}, formula (2.9) on page 27) we have
$$\sum_{s\in \mathcal{T}'(n,\delta)}\Lambda_{d,s}(\alpha)\leq 2e^{-\delta^2\mu/3},$$
which implies that
$$\PP_{r,\p}\left( \sum_{s\in \mathcal{T}(n,\delta)}f_{d,s}(Y)\geq 1\right)\leq 2n^{d+1}e^{-\delta^2\mu/3}.$$
We claim that $n^{d+1}e^{-\delta^2\mu/3}$ tends to $0$ as $n\to \infty$. 
Indeed, for $n$ large enough, 
 $$\log (n^{d+1}e^{-\delta^2\mu/3})<(d+1)\log n- n^{(1-\psi_{d+1}(\alpha_*))/2}\to -\infty$$
 since $1-\psi_{d+1}(\alpha_*)>0$. This proves statement (1).

Next we prove the statement (2) of the Lemma. Let $\omega=\omega(n)$ be a sequence tending to $\infty$ and let $t=\omega\mu$. 
Since $d\geq k$, where $k$ is the critical dimension, we have $$\mu = n^{1-\psi_{d+1}(\alpha)}\to 0.$$
Define the subset $\Omega'\subset \Omega_n^r$ as follows
$$\Omega'=\left\{ Y\in\Omega_n^r\ |\ f_d(Y)\geq\frac{\E (f_d)}{2} \right\}.$$
By Theorem \ref{thmdim}, one has $\PP_{r,\p}(Y\in\Omega')\to 1$ as $n\to\infty$ 
under the condition $\tau_{d}(\alpha_*)>0$. 
Hence, by the Markov inequality and Lemma \ref{lm8} we obtain
\begin{eqnarray*}
\PP_{r,\p}\left(\frac{f_{d,0}(Y)}{f_d(Y)}<1-t\right)
&\leq&\PP_{r,\p}\left(\frac{f_{d,0}(Y)}{f_d(Y)}<1-t\quad \mbox{and}\quad Y\in\Omega'\right)+\PP_{r,\p}\left( Y\not\in\Omega'\right)\\
&=&\PP_{r,\p}\left(\sum_{s\geq 1}\frac{f_{d,s}(Y)}{f_d(Y)}\geq t\quad \mbox{and}\quad Y\in\Omega'\right)+\PP_{r,\p}\left( Y\not\in\Omega'\right)\\
&\leq&\PP_{r,\p}\left(\sum_{s\geq 1}f_{d,s}(Y)\geq t\cdot\frac{\E(f_d)}{2}\right)+o(1)\\
&\leq& \frac{2\E\left(\sum_{s\geq 1}f_{d,s}\right)}{t\E(f_d)}+o(1)\\
&=& \frac{2}{t}\cdot\sum_{s\geq 1}\Lambda_{d,s}+o(1)
\end{eqnarray*}
It is obvious from the definition of $\Lambda_{d,s}$ that $\Lambda_{d,s}\leq \mu^s$ and since $\mu\to 0$ we see that for $n$ large enough $$\sum_{s\geq 1}\Lambda_{d,s}=\sum_{s\geq 1}\mu^s \, \leq \, 2 \mu=2 \frac{t}{\omega}.$$
In particular
$$\PP_{r,\p}\left(\frac{f_{d,0}(Y)}{f_d(Y)}<1-t\right)\leq\frac{4}{\omega}+o(1)\to 0.$$

\end{proof}

A simplicial complex $X$ is said to be {\it pure} if $f_{d,0}(X)=0$ for every $d<\dim X$.
The following result is an obvious corollary of the first part of Lemma \ref{degree}. It was also proven in \cite{Fowler}, Lemma 9. 
\begin{corollary}\label{corpure}
Let $Y\in\Omega_n^r$ be a random simplicial complex with respect to the probability multi-parameter $\p=(p_0,p_1,\ldots,p_r)$ where 
$p_i=n^{-\alpha_i}$ for $i=0,\ldots,r$ and the vector of exponents
$\alpha=\alpha(n)=(\alpha_0,\alpha_1,\ldots,\alpha_r)$
converges to a vector $\alpha_*\in\R^{r+1}_+$. Assume that $\alpha_\ast\in \D_k$ for some $k\geq 0$. 
Then with probability tending to one the $k$-dimensional skeleton $Y^{(k)}$ of $Y$ is pure, a.a.s..
\end{corollary}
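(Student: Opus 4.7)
The plan is to reduce the purity of $Y^{(k)}$ to the degree estimate of Corollary \ref{degree}(1) together with the existence of a $k$-dimensional face supplied by Theorem \ref{thmdim}. Recall that a complex $X$ is pure precisely when $f_{d,0}(X)=0$ for every $d<\dim X$. So to prove that $Y^{(k)}$ is pure of dimension $k$ a.a.s.\ it suffices to establish two facts: that $\dim Y^{(k)}=k$ a.a.s., and that $f_{d,0}(Y^{(k)})=0$ a.a.s.\ for each $d=0,1,\dots,k-1$.

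First I would handle the dimension. Since $\alpha_\ast\in\D_k$ we have $\psi_k(\alpha_\ast)<1$, and by (\ref{ineq1}) the same holds for every $\psi_i(\alpha_\ast)$ with $i\le k$. Thus $\tau_k(\alpha_\ast)=\sum_{i=0}^k[1-\psi_i(\alpha_\ast)]>0$, and Theorem \ref{thmdim}(c) gives $f_k(Y)\to\infty$ a.a.s., which in particular forces $\dim Y^{(k)}=k$ a.a.s.

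For the main purity condition, observe that for any $d<k$ the $(d+1)$-simplices of $Y^{(k)}$ coincide with the $(d+1)$-simplices of $Y$ (because $d+1\le k$), so $f_{d,0}(Y^{(k)})=f_{d,0}(Y)$. Fix $\delta\in(0,1)$ and apply Corollary \ref{degree}(1) for each $d\in\{0,1,\dots,k-1\}$: with probability tending to one, every $d$-face $\sigma\subset Y$ has degree within $\delta\mu$ of $\mu=n^{1-\psi_{d+1}(\alpha)}$. Since $\psi_{d+1}(\alpha_\ast)\le\psi_k(\alpha_\ast)<1$ one has $\mu\to\infty$, so every such $\sigma$ has degree at least $(1-\delta)\mu\ge 1$, giving $f_{d,0}(Y)=0$ a.a.s. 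A union bound over the finitely many values $d=0,\dots,k-1$, together with the dimension fact, then finishes the argument.

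The one point that deserves attention is the hypothesis $\psi_{d+1}(\alpha_\ast)>0$ built into Corollary \ref{degree}(1); if it fails for some $d<k$, a degenerate scenario forcing $\alpha_{\ast,i}=0$ for all $i\le d+1$, inspection of the Chernoff step in the proof of Corollary \ref{degree}(1) shows the conclusion still holds, since the only property of $\mu$ actually used is $\mu/\log n\to\infty$, which is guaranteed as soon as $\psi_{d+1}(\alpha_\ast)<1$. Hence no real obstacle arises, and the proof is a clean combination of the two earlier results.
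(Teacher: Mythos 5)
Your proof is correct and follows essentially the same route as the paper: apply Corollary \ref{degree}(1) for each $d<k$ to get $f_{d,0}(Y)=0$ a.a.s.\ and take a union bound over the finitely many dimensions. Your additional checks --- that $\dim Y^{(k)}=k$ a.a.s.\ via Theorem \ref{thmdim}(c), and that the degenerate case $\psi_{d+1}(\alpha_\ast)=0$ (formally excluded by the hypothesis of Corollary \ref{degree}(1)) causes no trouble --- only make the argument more complete than the paper's one-line deduction.
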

\begin{proof}
By statement (1) of Corollary \ref{degree} we have $f_{d,0}(Y)=0$, a.a.s. for every integer $0\leq d<k$. 
Thus, $$\sum_{d=0}^{k-1}f_{d,0}(Y)=0,\quad \mbox{a.a.s.},$$
i.e. with probability tending to one $Y$ has no degree zero simplices of dimension less than $k$.
\end{proof}

\section{Betti numbers and the fundamental group of random simplicial complexes}

The main result of this section states that random simplicial complexes have trivial rational homology below the critical dimension. This result was proven in \cite{Fowler}, Theorem 2. We also show that the fundamental groups of random simplicial complexes  have property (T) if the critical dimension equals $2$ and are trivial if the critical dimension is greater than $2$. 

\begin{theorem}\label{thmgarland}
Let $Y\in \Omega_n^r$ be a random simplicial complex with respect to the multi-parameter 
$\mathfrak p=(p_0, p_1, \dots, p_r)=n^{-\alpha}$ where $\alpha=\alpha(n)\in \R_+^{r+1}$ 
is a sequence of vectors converging to a vector $\alpha_*\in\mathfrak{D}_k$, where $2\le k\le r$. Then for every $0<j\leq k-1$ one has
$$H_{j}(Y;\mathbb Q)=0,$$
a.a.s.
\end{theorem}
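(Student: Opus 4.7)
My plan is to apply Garland's spectral-gap method, essentially following the Meshulam--Wallach scheme adapted to the multi-parameter framework (cf.\ \cite{Fowler}). Fix $j$ with $1 \le j \le k-1$; I will prove $H_j(Y; \mathbb{Q}) = 0$ separately for each such $j$. Since $H_j$ depends only on the $(j+1)$-skeleton, it suffices to show $H_j(Y^{(j+1)}; \mathbb{Q}) = 0$, and by Corollary \ref{corpure} the skeleton $Y^{(k)}$ (hence also $Y^{(j+1)}$) is pure a.a.s.

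I would then invoke the classical Garland criterion: for a pure $(j+1)$-dimensional simplicial complex $X$, one has $H_j(X; \R) = 0$ provided that for every $(j-1)$-simplex $\tau \in X$ the link $\lk_X(\tau)$ is a connected graph whose normalized Laplacian has smallest positive eigenvalue strictly exceeding $1 - 1/(j+1)$. Thus the task reduces to verifying this spectral-gap condition uniformly for all graph links $L(\tau) = \lk_{Y^{(j+1)}}(\tau)$ of $(j-1)$-simplices.

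The links are themselves multi-parameter random complexes: by Lemma 3.2 of \cite{CF15}, for a $(j-1)$-simplex $\tau$ the link $\lk_Y(\tau)$ is a random complex on $n-j$ vertices whose first two parameters compute to $p'_0 = n^{-\psi_j(\alpha)}$ and $p'_1 = n^{-(\psi_{j+1}(\alpha) - \psi_j(\alpha))}$ (the latter via Pascal's identity). Conditionally on its vertex set, $L(\tau)$ is then an Erd\H os--R\'enyi graph with edge probability $p'_1$, on roughly $n^{1 - \psi_j(\alpha)}$ vertices and with expected vertex degree of order $n^{1 - \psi_{j+1}(\alpha)}$. Because $\alpha_\ast \in \D_k$ and $j \le k-1$ force $\psi_{j+1}(\alpha_\ast) \le \psi_k(\alpha_\ast) < 1$, this expected degree is polynomially large in $n$, comfortably beyond the connectivity threshold. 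Standard spectral concentration estimates for dense random graphs then yield $\lambda_2(L(\tau)) = 1 - O(n^{-(1 - \psi_{j+1}(\alpha_\ast))/2})$, which exceeds $1 - 1/(j+1)$ for $n$ large, with failure probability decaying super-polynomially in $n$.

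Finally, a union bound over the at most $\binom{n}{j}$ candidate $(j-1)$-simplices transfers this pointwise estimate into a uniform one, and Garland's theorem applied to $Y^{(j+1)}$ yields $H_j(Y; \mathbb{Q}) = 0$ a.a.s. The main obstacle is the third step: obtaining the spectral-gap estimate with a sufficiently small failure probability to survive the union bound, while carefully tracking the (mild) extra randomness coming from the fact that the vertex set of $L(\tau)$ is itself random, so that the Erd\H os--R\'enyi structure only emerges after conditioning on it.
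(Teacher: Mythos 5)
Your proposal is correct and follows essentially the same route as the paper: identify the links of $(j-1)$-simplices as Erd\H{o}s--R\'enyi graphs with $p_0'=n^{-\psi_j(\alpha)}$ and $p_1'=n^{\psi_j(\alpha)-\psi_{j+1}(\alpha)}$, verify they are well above the connectivity/spectral-gap threshold because $\psi_{j+1}(\alpha_\ast)<1$, apply a spectral-gap theorem for random graphs (the paper uses Hoffman--Kahle--Paquette) with a union bound over all $O(n^{j})$ such simplices, and conclude via the Garland-type criterion (the paper uses the Ballmann--\'Swi\c{a}tkowski formulation, which asks for non-empty connected links rather than purity, so it bypasses your appeal to Corollary \ref{corpure}, but this is an immaterial difference).
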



Given a graph $G$ we denote by $\mathcal{L}=\mathcal{L}(G)$ the normalized Laplacian of $G$, see \cite{Chung}. All the eigenvalues of 
$\mathcal{L}$ lie in the interval $[0,2]$ and the multiplicity of $0$ as an eigenvalue is equal to the number of connected components of $G$.
A quantity of special interest is the smallest non-zero eigenvalue $\kappa(\mathcal{L})>0$ of $\mathcal{L}$, also known as {\it the spectral gap} of $G$. 

Let $X$ be finite simplicial complex. 
Each $\ell$-dimensional simplex $\sigma\subset X$ determines a graph $L_\sigma$ which is defined as the 1-skeleton of the link $\lk_X(\sigma)$. 
The vertices of $L_\sigma$ are simplexes of dimension $\ell+1$ containing $\sigma$ and the edges of $L_\sigma$ are the simplexes of dimension $\ell+2$
containing $\sigma$. 

\begin{lemma}\label{implies} Let $Y\in \Omega_n^r$ be a random simplicial complex with respect to the multi-parameter 
$\mathfrak p=n^{-\alpha}$ where $\alpha=\alpha(n)$ 
is a sequence of vectors in $\R_+^{r+1}$ converging to a vector $\alpha_*\in\mathfrak{D}_k$, where $2\le k\le r$. Then, with probability tending to one, 
$Y$ has the following property: the graph $L_\sigma$ associated to every $\ell$-dimensional simplex $\sigma\subset Y$, 
where $0\leq\ell\leq k-2$, is a non-empty connected graph with the spectral gap satisfying
 $$\kappa(\mathcal{L}_\sigma)>1-\frac{1}{\ell+2}.$$
\end{lemma}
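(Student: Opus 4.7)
The plan is to recognise $L_\sigma$ as essentially an Erd\H{o}s--R\'enyi random graph via the link model of \cite{CF15}, verify that its parameters force the normalised Laplacian spectral gap to be close to $1$, and then union-bound over all simplices $\sigma$ of dimension at most $k-2$.

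First, I would fix an $\ell$-simplex $\sigma\subset \Delta_n$ with $\ell\le k-2$ and condition on $\sigma\subset Y$. By Lemma~3.2 of \cite{CF15} (invoked also in the proof of Lemma~\ref{lm8}), the link $\lk_Y(\sigma)$ is a multi-parameter random simplicial complex in $\Omega_{n-\ell-1}^{r-\ell-1}$ with parameters $p'_i=\prod_{j=i}^{i+\ell+1}p_j^{\binom{\ell+1}{j-i}}$; in particular $p'_0=n^{-\psi_{\ell+1}(\alpha)}$ and, by Pascal's identity applied as in formula (\ref{sumpsi1}), $p'_0 p'_1=n^{-\psi_{\ell+2}(\alpha)}$. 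Conditionally on its random vertex set $V\subset\{1,\dots,n\}\setminus\sigma$, the 1-skeleton $L_\sigma$ is exactly the Erd\H{o}s--R\'enyi graph $G(|V|,p'_1)$, since in the multi-parameter model distinct pairs of chosen vertices are joined by an edge independently with probability $p'_1$.

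Second, I would estimate the two parameters of this random graph. Applying Theorem~\ref{thmdim}(c) to the link model gives $|V|=(1+o(1))\, n^{1-\psi_{\ell+1}(\alpha)}$ a.a.s., which tends to infinity as a positive power of $n$ since $\psi_{\ell+1}(\alpha_\ast)\le\psi_{k-1}(\alpha_\ast)<\psi_k(\alpha_\ast)<1$. The expected degree of a vertex of $L_\sigma$ is $|V|\cdot p'_1\approx n^{1-\psi_{\ell+2}(\alpha)}$, which also tends to infinity polynomially since $\ell+2\le k$ forces $\psi_{\ell+2}(\alpha_\ast)<1$.

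Third, I would invoke a standard spectral-gap concentration estimate for Erd\H{o}s--R\'enyi random graphs. For $G(N,q)$ with $Nq=N^{\Omega(1)}$, one has $\kappa(\mathcal{L})\ge 1-C/\sqrt{Nq}$ with probability at least $1-\exp(-N^{\varepsilon})$ for suitable constants $C,\varepsilon>0$ (e.g.\ Feige--Ofek, or the Hoffman--Kahle--Paquette argument specialised to dimension one). Specialising to $N=|V|$ and $q=p'_1$ yields $\kappa(\mathcal{L}_\sigma)>1-1/(\ell+2)$ for large $n$ with super-polynomially small failure probability, and positivity of $\kappa(\mathcal{L}_\sigma)$ simultaneously guarantees that $L_\sigma$ is non-empty and connected. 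The number of candidate simplices $\sigma$ of dimension at most $k-2$ is at most $n^{k-1}$, so a union bound absorbs the per-simplex failure probabilities and completes the proof. The main obstacle is this third step: citing or adapting a spectral-gap concentration bound for $G(N,q)$ in the regime $Nq=n^{\Omega(1)}$ that is sharp enough to survive the polynomial union bound over $\sigma$; the regime is dense enough that this is standard, but the exact form of the tail bound is what has to be pinned down.
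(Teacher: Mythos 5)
Your proposal follows essentially the same route as the paper: identify $L_\sigma$ as a random graph via the link lemma of \cite{CF15}, check that the edge probability is well above the connectivity threshold relative to the (random) number of vertices, invoke a spectral-gap estimate for Erd\H{o}s--R\'enyi graphs, and union-bound over the at most $n^{\ell+1}$ simplices of each dimension $\ell\le k-2$. The computation of $p_0'=n^{-\psi_{\ell+1}(\alpha)}$ and $p_0'p_1'=n^{-\psi_{\ell+2}(\alpha)}$, and the observation that $\ell+2\le k$ forces $\psi_{\ell+2}(\alpha_*)<1$ so that the expected degree grows polynomially, match the paper exactly. The paper uses the Hoffman--Kahle--Paquette theorem with the parameter $\delta=c=\ell+2$, so its per-simplex failure probability is only $n^{-(\ell+2)}=o(n^{-(\ell+1)})$ rather than super-polynomially small; your stronger tail bound in the dense regime works equally well for that step, and your remark that the spectral estimate is the crux is accurate.

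There is one step where your argument, as written, would not quite close: you control $|V|=f_0(L_\sigma)$ by applying Theorem \ref{thmdim}(c) to the link model, which is a Chebyshev/second-moment bound and only yields a failure probability of order $n^{-\delta_0(\alpha_*)/2}$ per simplex. That exponent can be arbitrarily small (it depends on $\alpha_*$), so it does not survive the union bound over $n^{\ell+1}$ simplices; your closing sentence assumes all per-simplex failure probabilities are absorbed, but this one need not be. The fix is exactly what the paper does: since $f_0(L_\sigma)$ conditioned on $\sigma\subset Y$ is binomial with parameters $(n-\ell-1,\,p_0')$ and mean $\mu\sim n^{1-\psi_{\ell+1}(\alpha)}\to\infty$ polynomially, one applies a Chernoff bound (Lemma 2.9 of \cite{CF15}) to get failure probability $2e^{-\mu^{2\epsilon'}/3}=e^{-n^{\Omega(1)}}$, after which the first-moment argument over all $\ell$-simplices goes through. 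One further small point: connectivity of $L_\sigma$ does not follow from positivity of $\kappa(\mathcal{L}_\sigma)$ when $\kappa$ is defined as the smallest \emph{non-zero} eigenvalue (as in this paper); connectivity must come as part of the conclusion of the random-graph theorem (as it does in the version of \cite{HKP} quoted here) or from a lower bound on the second-smallest eigenvalue counted with multiplicity.
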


Theorem \ref{thmgarland} obviously follows from Lemma \ref{implies} by applying the well-known result \cite{BS} to the skeleta $Y^{(\ell+2)}$ of a random complex
$Y$, where $\ell\le k-1$. 

\begin{theorem}[Ballman--\'{S}wi\c{a}tkowski \cite{BS}]
Let $\ell\geq 0$ be a non-negative integer. If $X$ is a finite $(\ell+2)$-dimensional simplicial complex such that for every $\ell$-dimensional simplex $\sigma\subset X$ the link $L_\sigma=\lk_X(\sigma)$ is non-empty and connected and 
the spectral gap of the normalized Laplacian $\mathcal{L}_\sigma$ of the link $L_\sigma$ satisfies $$\kappa(\mathcal L_\sigma)>1-\frac {1}{\ell+2},$$ then $$H^{\ell+1}(X;\mathbb Q)=0.$$
\end{theorem}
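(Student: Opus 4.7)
The plan is to apply Garland's local-to-global method, which reduces a vanishing statement for cohomology of $X$ in degree $\ell+1$ to a spectral condition on the links of $\ell$-dimensional faces. I will work throughout with real coefficients; since $X$ is finite, $H^{\ell+1}(X;\mathbb Q)=0$ is equivalent to $H^{\ell+1}(X;\R)=0$.

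First I would set up Hodge theory for the cochain complex $C^\ell(X)\to C^{\ell+1}(X)\to C^{\ell+2}(X)$. Equip each $C^i$ with the standard Garland inner product, in which an $i$-cochain $\phi$ is weighted by $w(\tau)=(\ell+2-i)!\cdot|\{\eta\in X^{(\ell+2)}:\tau\subset\eta\}|/(\ell+2)!$ on each $i$-simplex $\tau$; this is the unique weighting (up to scale) that makes the boundary/coboundary maps formal adjoints and makes all restriction maps to links isometric. Writing $\Delta_{\ell+1}=\Delta^-+\Delta^+$ with $\Delta^-=\delta^*\delta$ and $\Delta^+=\delta\delta^*$, Hodge theory identifies $H^{\ell+1}(X;\R)\cong\ker\Delta_{\ell+1}$, so it suffices to show that every harmonic cochain $\phi\in C^{\ell+1}$ is zero.

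Next I would execute Garland's localization. For each $\sigma\in X^{(\ell)}$, define $\phi_\sigma\in C^0(L_\sigma)$ on vertices of $L_\sigma$ (i.e. on $(\ell+1)$-simplices of $X$ containing $\sigma$) by $\phi_\sigma(v)=\pm\phi(\sigma\cup\{v\})$ with the appropriate orientation sign. A direct computation using the chosen weights yields the two Garland identities
\begin{align*}
\langle\Delta^+\phi,\phi\rangle &= \sum_{\sigma\in X^{(\ell)}}\langle\mathcal L_\sigma\phi_\sigma,\phi_\sigma\rangle_{L_\sigma},\\
\langle\phi,\phi\rangle &= \frac{1}{\ell+2}\sum_{\sigma\in X^{(\ell)}}\|\phi_\sigma\|^2_{L_\sigma},
\end{align*}
where the $\frac{1}{\ell+2}$ appears because each $(\ell+1)$-simplex of $X$ has exactly $\ell+2$ faces of dimension $\ell$. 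A parallel computation for $\delta\phi\in C^{\ell+2}$, together with the definition of the normalized Laplacian, gives the third identity
\begin{equation*}
\sum_{\sigma\in X^{(\ell)}}\|c_\sigma\|^2 = \|\phi\|^2+\frac{\ell+1}{\ell+2}\langle\Delta^-\phi,\phi\rangle,
\end{equation*}
where $c_\sigma\in C^0(L_\sigma)$ is the orthogonal projection of $\phi_\sigma$ onto $\ker\mathcal L_\sigma$; because $L_\sigma$ is nonempty and connected, $c_\sigma$ is simply the constant function equal to the degree-weighted mean of $\phi_\sigma$.

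With these identities in hand, I apply the spectral gap hypothesis. Since $\mathcal L_\sigma\succeq\kappa(\mathcal L_\sigma)\cdot(I-P_{\ker\mathcal L_\sigma})$, we get $\langle\mathcal L_\sigma\phi_\sigma,\phi_\sigma\rangle\geq\kappa(\mathcal L_\sigma)(\|\phi_\sigma\|^2-\|c_\sigma\|^2)$. Summing over $\sigma$ and substituting the three identities,
\begin{equation*}
\langle\Delta^+\phi,\phi\rangle\geq\min_\sigma\kappa(\mathcal L_\sigma)\cdot\Bigl((\ell+2)\|\phi\|^2-\|\phi\|^2-\tfrac{\ell+1}{\ell+2}\langle\Delta^-\phi,\phi\rangle\Bigr).
\end{equation*}
For a harmonic cochain both $\langle\Delta^+\phi,\phi\rangle$ and $\langle\Delta^-\phi,\phi\rangle$ vanish, which forces $\min_\sigma\kappa(\mathcal L_\sigma)\cdot(\ell+1)\|\phi\|^2\leq 0$; combined with the strict inequality $\kappa(\mathcal L_\sigma)>1-\frac{1}{\ell+2}>0$, this yields $\phi=0$.

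The main obstacle is bookkeeping the combinatorial weights so that the constant $1-\frac{1}{\ell+2}$ emerges with the correct sign. In particular, the second and third identities above must balance exactly: the $\frac{1}{\ell+2}$ in the $\|\phi\|^2$ formula and the $\frac{\ell+1}{\ell+2}$ in the $\Delta^-$ formula are what pin down the threshold, and getting them right requires a careful count of how an $(\ell+1)$-cochain's restrictions to the $\ell+2$ link-vertices of each containing $(\ell+1)$-simplex overlap. Once this accounting is verified, the strict gap $\kappa(\mathcal L_\sigma)>1-\frac{1}{\ell+2}$ produces a strict inequality that is incompatible with $\Delta\phi=0$ unless $\phi\equiv 0$.
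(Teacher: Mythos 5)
First, a point of comparison: the paper does not prove this statement at all --- it is imported as a black box from Ballmann--\'{S}wi\c{a}tkowski \cite{BS} and used only as input to Lemma \ref{implies}. Your plan (Hodge theory plus Garland's localization of an $(\ell+1)$-cochain to the links of $\ell$-simplices) is exactly the route taken in \cite{BS}, so the architecture is the right one. The difficulty is that the identities carrying all of the content are stated incorrectly, and the errors are not cosmetic bookkeeping.

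Notice that your argument proves too much. From your three identities, a harmonic $\phi$ gives $\min_\sigma\kappa(\mathcal L_\sigma)\cdot(\ell+1)\|\phi\|^2\le 0$, so the conclusion would follow from \emph{any} positive spectral gap, whereas the threshold $1-\frac1{\ell+2}$ is essentially sharp. Worse: for harmonic $\phi$ one has $\delta^*\phi=0$, and $(\delta^*\phi)(\sigma)$ is (up to a positive normalization) exactly the degree-weighted mean of $\phi_\sigma$, so $c_\sigma=0$ for every $\sigma$; your third identity then reads $0=\|\phi\|^2$ and the theorem becomes trivially true with no spectral hypothesis at all. The correct versions are $\sum_\sigma\|c_\sigma\|^2=\mathrm{const}\cdot\|\delta^*\phi\|^2$ with no free $\|\phi\|^2$ term, and, crucially,
\[
\sum_{\sigma\in X^{(\ell)}}\langle\mathcal L_\sigma\phi_\sigma,\phi_\sigma\rangle_{L_\sigma}\;=\;\|\delta\phi\|^2+(\ell+1)\,\|\phi\|^2 ,
\]
rather than $\|\delta\phi\|^2$ alone. (Check $\ell=0$ on a single triangle with edge values $a,b,c$: the three link contributions are $(c-b)^2+(a+c)^2+(a-b)^2=(a-b+c)^2+(a^2+b^2+c^2)$, i.e.\ the local contribution to $\|\delta\phi\|^2$ \emph{plus} the local contribution to $\|\phi\|^2$.) With the corrected identities, a harmonic $\phi$ yields $(\ell+1)\|\phi\|^2\ge\min_\sigma\kappa(\mathcal L_\sigma)\cdot(\ell+2)\|\phi\|^2$, and it is precisely the extra $(\ell+1)\|\phi\|^2$ that manufactures the threshold $\frac{\ell+1}{\ell+2}=1-\frac1{\ell+2}$. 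Your closing paragraph defers exactly this accounting as "the main obstacle"; it is in fact the entire proof, and as written it comes out wrong. A smaller point: the top-simplex-counting weights degenerate on simplices not contained in any $(\ell+2)$-simplex, so the argument needs $X$ pure (or must be run on the pure part of $X$); in the paper's application this is supplied separately by Corollary \ref{corpure}.
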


\begin{proof}[Proof of Lemma \ref{implies}]
%
%
%
%
%
%
%
%
%
Let $\sigma^{(\ell)}\subset \Delta_n$ be an $\ell$-simplex. We shall consider all random simplicial complexes $Y\in \Omega_n^r$ containing $\sigma$, where $\ell\le k-2$. 
By Lemma 4.1 of \cite{CF15}, the link $L_\sigma=\lk_Y(\sigma)$ of $\sigma$ in $Y^{(\ell+2)}$ is a random simplicial complex of dimension one (a random graph),
$L_\sigma\in\Omega_{n-\ell-1}^{1}$, with respect to the multi-parameter
$\mathfrak{p}'=(p_0',p_1')$
where
$$p_0'=\prod_{j=0}^{\ell+1}p_j^{\binom{\ell+1}{j}}=n^{-\psi_{\ell+1}(\alpha)}$$
and
$$p_1'=\prod_{j=1}^{\ell+2}p_j^{\binom{\ell+1}{j-1}}=\prod_{j=1}^{\ell+2}p_j^{\left[\binom{\ell+2}{j}-\binom{\ell+1}{j}\right]}=n^{\psi_{\ell+1}(\alpha)-\psi_{\ell+2}(\alpha)}.$$
In other words,
$$\frac{\PP_{r, \p}(Y\supset \sigma\quad \mbox{and}\quad \lk_Y(\sigma)=L)}{\PP_{r, \p}(Y\supset \sigma)} = \PP_{1, \p'}(L).$$


Denote $$\mu =(n-\ell-1)p_0';$$ clearly, 
$\mu$ is the expected number of vertices $f_0(L_\sigma)$ in the random graph $L_\sigma$, which coincides with the expected degree of a simplex of dimension $\ell$  in $Y$. 
Recall that $\alpha_\ast\in \D_k$ implies $\psi_k(\alpha_\ast) <1$ (see (\ref{defdk})) and hence for $n$ sufficiently large one has 
$\psi_{\ell+1}(\alpha_\ast)<\psi_{\ell+2}(\alpha_\ast)<1.$ 
Therefore we have
$$\mu\sim n^{1-\psi_{\ell+1}(\alpha)}\to\infty.$$
We may apply Lemma 2.9 of \cite{CF15}  
 to obtain that, conditioned to $\sigma^{(\ell)}$ being a simplex of the random complex $Y$, for any $\epsilon>0$ and any $0<\epsilon'<1/2$ there exists an integer $N_{\epsilon,\epsilon'}$ such that for all $n>N_{\epsilon,\epsilon'}$ 
 one has
\begin{eqnarray}\label{vertices1}
(1-\epsilon)\mu\leq f_0(L_\sigma)\leq (1+\epsilon)\mu
\end{eqnarray}
 with probability
$$\geq 1-2e^{-\frac{1}{3}\mu^{2\epsilon'}}.$$
A first moment argument shows that (\ref{vertices1}) holds \textit{simultaneously} for all $\ell$-simplices in $Y$, a.a.s.
Indeed, if $\eta_\ell$ is the random variable that counts the number of $\ell$-simplices of $Y$ for which inequality (\ref{vertices1}) fails then for some 
$\epsilon''>0$,
\begin{eqnarray*}
\E(\eta_\ell)&\leq& n^{\ell+1}\cdot2\exp\left(-\frac{\mu^{2\epsilon'}}{3}\right)\\
&\leq& Cn^{\ell+1}\exp(-n^{\epsilon''})\\
&=& o(1).
\end{eqnarray*}
We conclude that for any $\epsilon>0$ a random complex $Y\in\Omega_n^r$ with probability tending to one 
%
 has the property that for every $\sigma^{(\ell)}\subset Y$ the graph $L_\sigma$ satisfies
\begin{eqnarray*}
\frac{f_0(L_\sigma)p_1'}{\log(f_0(L_\sigma))}&>&\frac{(1-\epsilon)(n-\ell-1)p_0'p_1'}{\log((1+\epsilon)(n-\ell-1)p_0')}\\
&=&\frac{(1-\epsilon)(n-\ell-1)n^{-\psi_{\ell+2}(\alpha)}}{\log((1+\epsilon)(n-\ell-1)n^{-\psi_{\ell+1}(\alpha)})}\\
&>&C_{\epsilon,\ell}\frac{n^{1-\psi_{\ell+2}(\alpha)}}{(1-\psi_{\ell+1}(\alpha))\log n} \, \, \, \to \, \, \,  \infty,
\end{eqnarray*}
where $C_{\epsilon,\ell}>0$ is a constant dependent only on $\epsilon$ and $\ell$.
%
Hence for any real number $\delta>0$ and $n$ large enough (depending on $\delta$) we obtain
\begin{eqnarray}\label{p1'}
p_1'>\frac{(1+\delta)\log(f_0(L_\sigma))}{f_0(L_\sigma)}
\end{eqnarray}
for every $\ell$-simplex $\sigma^{(\ell)}\subset Y$, a.a.s.

Now we shall use a simplified version of Theorem 1.1 from \cite{HKP} (as appears in \cite{Fowler}) which states the following.  
Fix $\delta>0$ and let
$
p\geq \frac{(1+\delta)\log n}{n}.
$
Let $G\in G(n,p)$ be a Erd\H{o}s--R\'{e}nyi random graph. Then for any $c>1$ there exists $N_{c,\delta}>0$ (depending on $c$ and $\delta$) such that for $n>N_{c,\delta}$ the graph $G$ is connected and one has
$$\kappa(G)> 1-\frac{1}{c}$$
with probability at least $1-n^{-\delta}$.
We apply this result to Theorem the link $L_\sigma$ of every  $\ell$-simplex $\sigma\subset Y$.
%
Choosing $\delta=c=\ell+2$ and applying this theorem to the graphs $L_\sigma$ gives the bound
\begin{eqnarray}\label{spectralbound}
\PP_{r, \p}\left(b_0(L_\sigma)>1 \quad \mbox{or} \quad \min_{\sigma^{(\ell)}\subset Y}\kappa(\mathcal{L}_\sigma)\leq \frac{1}{\ell+2}\right)=o(n^{-(\ell+1)})
\end{eqnarray}
Hence the expected number of $\ell$-simplexes $\sigma$ in $Y$ with either disconnected $L_\sigma$ or such that 
$\kappa(\mathcal{L}_\sigma)\leq \frac{1}{\ell+2}$ tends to zero and by the Markov inequality it follows
that with probability tending to one $Y$ satisfies the property that all graphs $L_\sigma$ are connected and 
$$\kappa(\mathcal{L}_\sigma)>1-\frac{1}{\ell+2}$$ for every $\ell$-simplex $\sigma\subset Y$. 
This completes the proof of Lemma \ref{implies}. 
%
%
%
%
%
%
%
%
%
\end{proof}

Next we state a few related results. 

\begin{theorem}\label{k1}
Let $Y\in \Omega_n^r$ be a random simplicial complex with respect to the multi-parameter 
$\mathfrak p=(p_0, p_1, \dots, p_r)=n^{-\alpha}$ where $\alpha=\alpha(n)\in \R_+^{r+1}$ 
is a sequence of vectors converging to a vector $\alpha_*\in\mathfrak{D}_k$ where $1\le k\le r$. Then 
$b_0(Y)=1$, i.e. $Y$ is connected, 
a.a.s.
\end{theorem}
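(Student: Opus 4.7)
The plan is to reduce connectivity of $Y$ to the classical Erd\H{o}s--R\'{e}nyi connectivity theorem applied to the 1-skeleton $Y^{(1)}$ of $Y$. Since $\alpha_*\in\mathfrak D_k$ with $k\ge 1$, the chain (\ref{ineq1}) gives $\psi_0(\alpha_*)\le\psi_1(\alpha_*)\le\psi_k(\alpha_*)<1$; in particular $\alpha_0^*<1$ and $\alpha_0^*+\alpha_1^*<1$. Applying Theorem~\ref{thmdim}(c) with $d=0$, the number of vertices $m:=f_0(Y)$ is concentrated near $n^{\tau_0(\alpha)}=n^{1-\alpha_0}$, so $m\to\infty$ a.a.s.

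Next, I would argue that, conditional on the 0-skeleton $Y^{(0)}=V$ with $|V|=m$, the 1-skeleton $Y^{(1)}$ on $V$ is distributed as the Erd\H{o}s--R\'{e}nyi graph $G(m,p_1)$. This is a structural feature of the multi-parameter model: the recursive description encoded in (\ref{def1}) includes each potential edge on $V$ independently with probability $p_1$, because the higher-dimensional factors telescope when summed out over their configurations. In parallel language, this is the content of the Lemma 3.2-type link statements in \cite{CF15}, here applied to the ``link of the empty simplex.''

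Having this in hand, I would invoke the connectivity threshold for $G(m,p)$: the graph is connected a.a.s. provided $mp-\log m\to\infty$. In our setup $mp_1=(1+o(1))n^{1-\psi_1(\alpha)}$ grows polynomially in $n$ (since $\psi_1(\alpha_*)<1$), while $\log m=O(\log n)$, so this condition holds. Thus $Y^{(1)}$ is connected a.a.s., and because $Y^{(1)}\subset Y$ with the same vertex set, path-connectedness transfers to $Y$, yielding $b_0(Y)=1$ a.a.s.

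The only non-routine point is the conditional distribution argument in the second paragraph; all other steps are direct applications of results already available in this paper or in the classical random graph literature.
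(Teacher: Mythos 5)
Your argument is correct and is essentially the paper's own proof: the paper simply cites Corollary 7.2 of \cite{CF15}, whose hypothesis $np_0p_1-\log(np_0)\to\infty$ is exactly your Erd\H{o}s--R\'enyi condition $mp_1-\log m\to\infty$ with $m\approx np_0=n^{1-\alpha_0}$, and verifies it from $\psi_1(\alpha_\ast)<1$ just as you do. The only difference is that you unpack the reduction (conditioning on the vertex set and identifying the $1$-skeleton with $G(m,p_1)$) that the cited corollary already encapsulates.
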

\begin{proof} We want to show that the conditions of Corollary 7.2 from \cite{CF15} are satisfied, i.e. 
$np_0p_1-\log (np_0) \to \infty.$
This is equivalent to 
\begin{eqnarray}\label{obvious}
n^{1-\alpha_0-\alpha_1}-(1-\alpha_0)\log n \to \infty.\end{eqnarray}
Since the vector of exponents $\alpha$ converges to $\alpha_\ast\in \D_k$, where $k\ge 1$, we have 
$\psi_1(\alpha_\ast)<1$ and therefore $1-\alpha_0-\alpha_1>(1-\alpha_{\ast 0}-\alpha_{\ast 1})/2 =\delta>0$ for all large enough $n$.
This shows that (\ref{obvious}) is obviously satisfied. 
\end{proof}

\begin{theorem}\label{k2}
Let $Y\in \Omega_n^r$ be a random simplicial complex with respect to the multi-parameter 
$\mathfrak p=(p_0, p_1, \dots, p_r)=n^{-\alpha}$ where $\alpha=\alpha(n)\in \R_+^{r+1}$ 
is a sequence of vectors converging to a vector $\alpha_*\in\mathfrak{D}_k$ with $k\ge 2$. Then the fundamental group
$\pi_1(Y)$ 
has property (T),
a.a.s.
\end{theorem}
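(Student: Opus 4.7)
The plan is to deduce property (T) from the Żuk criterion, which states that if $X$ is a finite $2$-dimensional simplicial complex whose vertex links are all non-empty, connected graphs with spectral gap of the normalized Laplacian strictly greater than $1/2$, then $\pi_1(X)$ has Kazhdan's property (T). Since the fundamental group of any simplicial complex depends only on its $2$-skeleton, we have the identification $\pi_1(Y)\cong \pi_1(Y^{(2)})$, so it suffices to verify the Żuk hypotheses for $Y^{(2)}$ with probability tending to one.

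First I would invoke Lemma \ref{implies} in the special case $\ell=0$; this is permissible precisely because we assume $k\ge 2$, so the condition $0\le \ell\le k-2$ holds. The lemma then provides, a.a.s., that for every vertex $v\subset Y$ the link graph $L_v$ (whose vertices are the edges through $v$ and whose edges are the triangles through $v$) is non-empty, connected, and satisfies
\begin{equation*}
\kappa(\mathcal{L}_v)\, >\, 1-\frac{1}{0+2}\, =\, \frac{1}{2}.
\end{equation*}
Note that non-emptiness of the links already implies that the $0$-skeleton is pure in the $2$-skeleton, so every vertex of $Y$ actually sits in some triangle of $Y^{(2)}$; this is the only additional regularity condition one sometimes needs for the Żuk criterion.

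Applying Żuk's theorem to $Y^{(2)}$ then yields that $\pi_1(Y^{(2)})$ has property (T), and hence so does $\pi_1(Y)$, a.a.s. The main (and essentially only) work has already been carried out in Lemma \ref{implies}; the present theorem amounts to packaging that spectral information with the standard conversion between spectral gap of links and property (T) of the fundamental group. The one point that would require care, if one wanted to be fully rigorous, is the precise statement of the Żuk criterion one cites: some formulations assume a regularity condition (that every edge lies in a triangle, or purity in dimension $2$); both Corollary \ref{corpure} and the non-emptiness of the vertex links guaranteed by Lemma \ref{implies} are more than enough to supply such hypotheses a.a.s.
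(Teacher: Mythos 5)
Your proposal is correct and follows essentially the same route as the paper: the paper likewise proves Theorem \ref{k2} by applying Lemma \ref{implies} (with $\ell=0$, available since $k\ge 2$) to get connected vertex links with $\kappa(\mathcal{L}_v)>1/2$ a.a.s., and then invoking \.Zuk's criterion, with purity of the $2$-skeleton supplied by Corollary \ref{corpure}. Your additional remarks about $\pi_1(Y)\cong\pi_1(Y^{(2)})$ and the regularity hypothesis only make explicit what the paper leaves implicit.
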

To show that $\pi_1(Y)$ has property (T) (a.a.s.) we apply Lemma \ref{implies} and 
the following result of A. { \. Z}uk:

\begin{theorem}[\.{Z}uk, \cite{Z}]
If $X$ is a finite, pure $2$-dimensional simplicial complex so that for every vertex $v$ the link $L_v$ is connected and the normalised Laplacian 
$\mathcal{L}=\mathcal{L}(L_v)$ satisfies $\kappa(\mathcal{L})>1/2$. Then $\pi_1(X)$ has property (T).
\end{theorem}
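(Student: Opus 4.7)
The plan is to prove Żuk's theorem by combining the Delorme--Guichardet characterization of property (T) with the Garland--Matsushima local-to-global spectral method on the universal cover of $X$.

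First, by the Delorme--Guichardet theorem, property (T) for $\Gamma := \pi_1(X)$ is equivalent to the vanishing of $H^1(\Gamma,\pi)$ for every unitary representation $(\pi,\mathcal{H})$ of $\Gamma$ (equivalently, every affine isometric action of $\Gamma$ on $\mathcal{H}$ admits a fixed point). I would pass to the universal cover $\widetilde X$, on which $\Gamma$ acts freely, properly, and cocompactly, and identify 1-cocycles $\Gamma\to\mathcal{H}$ with $\Gamma$-equivariant $\mathcal{H}$-valued twisted 1-cochains $c\in C^1(\widetilde X,\mathcal{H})$ satisfying the simplicial cocycle condition on every triangle; coboundaries correspond to $c=\delta v$ for equivariant $v\in C^0(\widetilde X,\mathcal{H})$. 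The assumptions of purity, connectedness of every link, and the spectral gap $\kappa(\mathcal{L}(L_v))>1/2$ are local and therefore lift verbatim to every vertex of $\widetilde X$.

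Second, I would establish the Garland--Matsushima localization identity: for any $\mathcal{H}$-valued 1-cochain $c$ on a pure 2-complex,
\begin{equation*}
\|\delta c\|^2 + \|\delta^\ast c\|^2 \,=\, \sum_v \langle (2I-\mathcal{L}(L_v))\, c_v,\, c_v\rangle,
\end{equation*}
where $c_v\in \mathcal{H}^{V(L_v)}$ is the localization of $c$ at $v$ (assigning to each vertex of $L_v$, i.e.\ each edge of $\widetilde X$ incident to $v$, the corresponding value of $c$), and $\mathcal{L}(L_v)$ acts coordinate-wise on $\mathcal{H}$. This identity is the algebraic core of Garland's method and is derived by direct expansion of the simplicial Laplacian, together with the combinatorial identity $\sum_v\|c_v\|^2 = 2\|c\|^2$ (each edge lies in exactly two vertex stars).

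Third, assuming $c$ is a cocycle, so $\delta c=0$, the identity reduces to $\|\delta^\ast c\|^2 = \sum_v \langle(2I-\mathcal{L}(L_v))c_v,c_v\rangle$. By connectedness of every $L_v$, $\ker\mathcal{L}(L_v)$ is exactly the one-dimensional space of constant $\mathcal{H}$-valued functions on $V(L_v)$, and these constants correspond precisely to the local contribution of a coboundary $\delta v^\ast$ at $v$. On the orthogonal complement one has $\mathcal{L}(L_v)\geq \kappa I > I/2$. Decomposing each $c_v$ orthogonally into its constant part and its mean-zero part, and summing, yields a strict Poincar\'e-type inequality $(2\kappa-1)\|c-\delta v^\ast\|^2\,\le\,\|\delta^\ast c\|^2$ for an explicit equivariant $v^\ast$. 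Since $2\kappa-1>0$ and $c-\delta v^\ast$ is still a cocycle orthogonal to coboundaries, this forces $c=\delta v^\ast$, giving $H^1(\Gamma,\pi)=0$ and hence property (T).

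The main obstacle is verifying the Garland--Matsushima identity carefully in the $\mathcal{H}$-valued, $\Gamma$-equivariant setting: one must work on a fundamental domain, use cocompactness to reduce the formally infinite sums over vertices of $\widetilde X$ to finite sums on $X$, and ensure that the spectral decomposition of each finite-dimensional link Laplacian $\mathcal{L}(L_v)$ is applied coordinate-wise on $\mathcal{H}$ in a way compatible with the global $\delta,\delta^\ast$ adjoints. The sharp constant $1/2$ is forced by the double counting $\sum_v\|c_v\|^2=2\|c\|^2$: it is the precise threshold at which the factor of $2$ from edges belonging to two stars matches the local spectral gap to yield a strictly positive global Poincar\'e constant.
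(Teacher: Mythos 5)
First, a point of order: the paper does not prove this statement at all --- it is quoted from \.{Z}uk's paper \cite{Z} and used as a black box to deduce Theorem \ref{k2} --- so there is no in-paper argument to compare yours against, and I am judging your outline on its own terms. Your overall strategy (Delorme--Guichardet to reduce property (T) to vanishing of $H^1(\Gamma,\pi)$, then Garland-style localization over the links on the universal cover) is indeed the standard route to \.{Z}uk's spectral criterion, and the reductions in your first step are fine.

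The gap is in your second and third steps: the localization identity you write down is false, and even granting it, your deduction from it runs in the wrong direction. Already for $X$ a single $2$-simplex with vertices $a,b,c$ and an antisymmetric real-valued $1$-cochain $c$ with $x=c(ab)$, $y=c(ac)$, $z=c(bc)$, direct expansion gives $\sum_v\langle(2I-\mathcal{L}(L_v))c_v,c_v\rangle=3\|c\|^2-\|\delta c\|^2$, which does not equal $\|\delta c\|^2+\|\delta^\ast c\|^2$ (test $x=1,\,y=z=0$ against $x=y=z=1$: no rescaling of the $\delta^\ast$ term reconciles the two). The correct Garland identity has the form
\begin{equation*}
\sum_v\bigl\langle \mathcal{L}(L_v)\,c_v,\,c_v\bigr\rangle \;=\; \|\delta c\|^2+\|c\|^2
\end{equation*}
(with the usual weights, an edge being counted with multiplicity the number of triangles containing it). The sign matters: the link Laplacian must enter \emph{positively}, because the hypothesis $\kappa(\mathcal{L}(L_v))>1/2$ is a \emph{lower} bound on $\langle\mathcal{L}(L_v)c_v,c_v\rangle$ over mean-zero components. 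With your $2I-\mathcal{L}(L_v)$ the spectral gap only yields an upper bound on the right-hand side, and the Poincar\'e inequality $(2\kappa-1)\|c-\delta v^\ast\|^2\le\|\delta^\ast c\|^2$ you assert does not follow. The argument that actually closes is: pass to a harmonic representative of the class of $c$ --- which requires the image of $\delta$ to be closed in the (infinite-dimensional) space of equivariant cochains, a point you should not gloss over; it can be extracted from the spectral gap itself, or one invokes the equivalence of (T) with vanishing of \emph{reduced} $1$-cohomology. For harmonic $c$, the condition $\delta^\ast c=0$ kills the component of each $c_v$ in $\ker\mathcal{L}(L_v)$ (here connectedness of $L_v$ is used, so that this kernel is one-dimensional), whence $\sum_v\langle\mathcal{L}(L_v)c_v,c_v\rangle\ge\kappa\sum_v\|c_v\|^2=2\kappa\|c\|^2$, while the identity with $\delta c=0$ says this sum equals $\|c\|^2$; since $2\kappa>1$ this forces $c=0$.
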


Note that the domain 
$\D_2$ is given by the inequalities
$$\alpha_0+2\alpha_1+\alpha_2 <1< \alpha_0+3\alpha_1+3\alpha_2+\alpha_3;$$
it can be 
decomposed into two subdomains 
$$\alpha_0+3\alpha_1+2\alpha_2 <1< \alpha_0+3\alpha_1+3\alpha_2+\alpha_3,$$
where the  fundamental group $\pi_1(Y)$ is trivial, and 
$$\alpha_0+2\alpha_1+\alpha_2 <1< \alpha_0+3\alpha_1+2\alpha_2,$$
where the  fundamental group $\pi_1(Y)$ is nontrivial, see Theorem 8.1 from \cite{CF15}. See Figure \ref{domaind2}. 
Note that in the second subdomain the first homology group with rational coefficients $H_1(\pi_1(Y); \Q)=0$ is trivial, i.e. we are dealing with a nontrivial random perfect group. 
 \begin{figure}[h]
\centering
\includegraphics[width=0.5\textwidth]{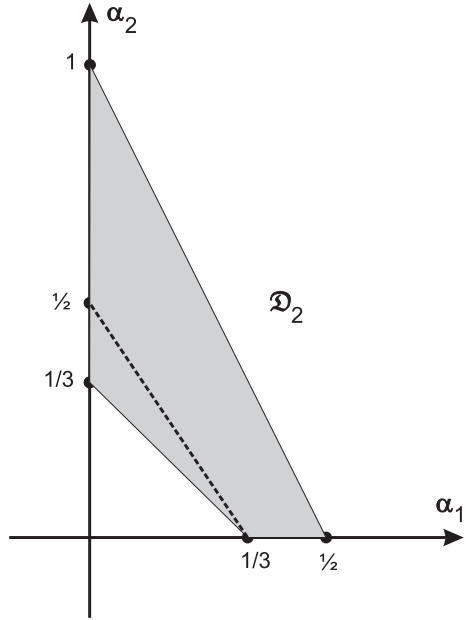}
\caption{Intersections of the domain $\D_2$ with the plane $\alpha_0=\alpha_3=\dots=0$ and its two subdomains. The simple connectivity subdomain lies below the dotted line. }\label{domaind2}
\end{figure}

Finally we show that random complexes are simply-connected assuming that $\alpha_\ast\in \D_k$ for $k\ge 3$:

\begin{theorem}\label{kgreater3}
Let $Y\in \Omega_n^r$ be a random simplicial complex with respect to the multi-parameter 
$\mathfrak p=(p_0, p_1, \dots, p_r)=n^{-\alpha}$, where $\alpha=\alpha(n)\in \R_+^{r+1}$ 
is a sequence of vectors converging to a vector $\alpha_*\in\mathfrak{D}_k$, where $3\le k\le r$. Then 
$\pi_1(Y)=1,$ i.e. $Y$ is simply connected, 
a.a.s.
\end{theorem}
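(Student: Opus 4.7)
My approach will mirror the proof of Theorem \ref{k2}: combine the spectral information about links provided by Lemma \ref{implies} (now applicable to both vertex and edge links, since $k-2\ge 1$) with a topological criterion that upgrades spectral hypotheses into simple connectivity rather than merely property (T).

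The first step is to extract the relevant spectral input. Since $k\ge 3$, Lemma \ref{implies} applies to $\ell=0$ and $\ell=1$, yielding a.a.s.\ that for every vertex $v\subset Y$ the $1$-skeleton of $\lk_Y(v)$ is connected with normalized Laplacian spectral gap $>1/2$, and for every edge $e\subset Y$ the $1$-skeleton of $\lk_Y(e)$ is connected with spectral gap $>2/3$. Moreover, by Corollary \ref{corpure}, the $3$-skeleton $Y^{(3)}$ is pure a.a.s. Since $\pi_1(Y)=\pi_1(Y^{(2)})=\pi_1(Y^{(3)})$, we may work with the pure $3$-complex $Y^{(3)}$.

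The second step invokes a local-to-global simple connectivity criterion: for a pure $3$-dimensional simplicial complex $X$ whose vertex links are connected with spectral gap $>1/2$ and whose edge links are connected with spectral gap $>2/3$, one has $\pi_1(X)=1$. This is the natural analogue of Żuk's theorem (which produces property (T) from vertex-link spectral gaps alone) strengthened by the edge-link hypothesis in the spirit of Ballmann--Świątkowski. Plugging in the properties of $Y^{(3)}$ established in the first step then yields $\pi_1(Y)=1$ a.a.s.

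The main obstacle is locating (or establishing) the precise local-to-global criterion appealed to above: Żuk's theorem, as stated in the paper, only yields property (T), and the Ballmann--Świątkowski theorem yields only vanishing of rational cohomology, so a slightly stronger result is needed. A direct combinatorial alternative would be to show that every edge-loop in $Y^{(1)}$ bounds a disk in $Y^{(2)}$: (i) for each triangular loop $\{u,v,w\}$, find an apex $x\in V(Y)$ with $uvx,vwx,uwx$ all $2$-simplices of $Y$---the expected number of such apices is $n^{1-\alpha_0-3\alpha_1-3\alpha_2}\ge n^{\alpha_3}\to\infty$ since $\psi_3(\alpha_*)<1$, and a Chernoff estimate together with a union bound over triangles gives the property simultaneously, a.a.s.; (ii) decompose longer loops into triangular loops using chords and the density of $Y^{(1)}$ guaranteed by $\psi_2(\alpha_*)<1$. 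Step (i) is straightforward; the delicate point is making the length reduction in step (ii) rigorous, since induced cycles of every length exist a.a.s.\ in $Y^{(1)}$.
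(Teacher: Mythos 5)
Your proposal does not close the argument, and both routes you sketch have a genuine gap at the decisive step. The paper's own proof is much shorter and different in kind: it observes that $\alpha_*\in\D_k$ with $k\ge 3$ forces $\psi_3(\alpha_*)<1$, hence $\alpha_{*0}+3\alpha_{*1}+2\alpha_{*2}<1$, and then simply verifies the hypotheses of Theorem 8.1 of \cite{CF15}, which is precisely the local-to-global simple connectivity statement you were unable to locate. In other words, the entire content of the theorem is outsourced to that earlier result, and the inequality $\alpha_0+3\alpha_1+2\alpha_2<1$ is exactly the boundary of the ``simple connectivity subdomain'' drawn in Figure \ref{domaind2}.

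Concerning your first route: there is no spectral local-to-global criterion for simple connectivity of the kind you posit, and the paper itself explains why one should not expect one. In the subdomain of $\D_2$ where $\alpha_0+2\alpha_1+\alpha_2<1<\alpha_0+3\alpha_1+2\alpha_2$ the random complex has connected vertex links with large spectral gap (Lemma \ref{implies} applies there), yet $\pi_1(Y)$ is a \emph{nontrivial} perfect group with property (T). So spectral gaps of links control $H^*(\,\cdot\,;\Q)$ and property (T), but cannot detect triviality of a perfect fundamental group; adding the edge-link hypothesis does not repair this, since $\pi_1(Y)=\pi_1(Y^{(2)})$ is insensitive to what happens in dimension $3$. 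Concerning your second route: it is the right idea in spirit (it is essentially how Theorem 8.1 of \cite{CF15} is proved), and your step (i) is fine, but step (ii) --- reducing an arbitrary loop in $Y^{(1)}$ to triangular loops using chords that actually lie in $Y$ and triangles that actually bound --- is the entire difficulty, and you explicitly leave it open. Since $Y^{(1)}$ is far from complete when $\alpha_1>0$, a long cycle need not admit any chord in $Y$, and the known arguments (as in \cite{BHK}, \cite{CCFK}) require a careful analysis of fillings; this is not a routine density argument. As written, the proposal therefore identifies the correct obstacles but does not overcome either of them.
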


\begin{proof} We apply Theorem 8.1 from \cite{CF15}. Since $\alpha_\ast\in \D_k$ where $k\ge 3$ we have 
$\psi_3(\alpha_\ast)<1$, i.e. $\alpha_{\ast 0}+3\alpha_{\ast 1}+3\alpha_{\ast 2} + \alpha_{\ast 3}<1$ implying that 
$$\alpha_{\ast 0}+3\alpha_{\ast 1}+2\alpha_{\ast  2}<1.$$ 
Denoting $1- \alpha_{\ast 0}- 3\alpha_{\ast 1}- 2\alpha_{\ast  2}= 2\delta$ we see that 
$$1- \alpha_{ 0}+3\alpha_{1}- 2\alpha_{ 2}>\delta>0$$
for all sufficiently large $n$. Now one checks that the relations (35), (36), (37) of Theorem 8.1 from \cite{CF15} are obviously satisfied. 
This completes the proof. 
\end{proof}

{\bf Question:} { \it Is it true that for $\alpha_\ast\in \D_1$ a random complex $Y\in \Omega_n^r$ has a free fundamental group, a.a.s.? }

We know that it is true in two important special cases: in the Linial-Meshulam model (see \cite{CCFK}, Theorem 1) and in the case of clique complexes of random graphs
(see \cite{CFH}, Theorem A). In the Linial - Meshulam case one has $r=2$ and the probability multi-parameter has the exponent 
$(0,0, \alpha_2)$; the domain $\D_1$ is given by the inequality $\alpha_2>1$. In the case of clique complexes of random graphs, the exponent of the probability multi-parameter is $(0, \alpha_1, 0, 0, \dots)$ and the domain $\D_1$ is described by the inequality $1/2< \alpha_1<1$. 
\vskip 0.4 cm

{\bf Question:} Another interesting open question is {\it whether, on certain subdomain of $\D_k$, any finite simplicial complex $S$ of dimension $\le k$ admits a topological embedding into a random simplicial complex $Y\in \Omega_n^r$, a.a.s. }
For the Linial-Meshulam model the corresponding subdomain was described in Theorem 3 from \cite{CCFK}. By a topological embedding we understand a simplicial embedding of a subdivision of $S$ into $Y$. 

\section{Minimal cycles and spheres above the critical dimension}

By a $k$-dimensional minimal cycle $S$ we understand a $k$-dimensional simplicial complex $S$ such that $b_k(S)=1$ and $b_k(S')=0$ for any proper subcomplex. 
In this section we examine minimal cycles contained in multi-parameter random simplicial complexes. 
We 
show that in dimensions above the critical dimension there may exist only "small" minimal cycles, i.e. having limited number of vertices. 
Next 
we apply the Dehn -- Sommerville relations to give an improved estimate of "the size" of spheres 
contained in random simplicial complexes.

Let $Y$ be a random complex with respect to the multi-parameter $\p=n^{-\alpha}$ where $\alpha=(\alpha_0, \alpha_1, \dots, \alpha_r)\in \R^{r+1}_+$. 
In this section we assume (for the sake of simplicity) that the multi-exponent $\alpha\in  \R^{r+1}_+$ is constant, i.e. it does not depend on $n$. 

Let $k$ be an integer lying above the critical dimension and not exceeding the real dimension;
this is equivalent to the assumptions:
\begin{eqnarray}\label{between}
\psi_k(\alpha) >1 \quad\mbox{and}\quad \phi_k(\alpha)<1.\end{eqnarray}
Recall the notations we use
$$\psi_k(\alpha)=\sum_{i=0}^r \binom k i \alpha_i \quad \mbox{and}\quad \phi_k(\alpha)=\sum_{i=0}^r \binom k i \frac{\alpha_i}{i+1}.$$
We shall use the relation
\begin{eqnarray}\label{rel}
(k+1)\cdot \phi_k(\alpha) \, =\,  \sum_{i=0}^k \psi_i(\alpha),
\end{eqnarray}
which follows from (\ref{sumpsi1}). The following result is also contained in \cite{Fowler}. 

\begin{theorem}\label{smallcycles} Under the above assumptions, 
a random simplicial complex $Y\in \Omega_n^r$ contains no strongly connected\footnote{Recall that a simplicial complex $S$ of dimension $k$ is strongly connected if the space $S-S^{(k-2)}$ is path-connected.} $k$-dimensional subcomplexes $S$ with 
\begin{eqnarray}\label{f0} f_0(S) > (k+1)\left[1+\frac{1-\phi_k(\alpha)}{\psi_k(\alpha)-1}\right]\end{eqnarray}
vertices, a.a.s. In particular, with probability tending to one as $n\to \infty$, a random complex $Y\in \Omega_n^r$ has the following property:
any $k$-dimensional minimal cycle $S$ contained in $Y$ must satisfy
\begin{eqnarray}\label{f0yclec} f_0(S) \le (k+1)\left[1+\frac{1-\phi_k(\alpha)}{\psi_k(\alpha)-1}\right].\end{eqnarray}
%
%
\end{theorem}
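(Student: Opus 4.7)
The plan is a first moment argument. Let $X_v$ denote the number of strongly connected $k$-dimensional subcomplexes $S\subset Y$ with $f_0(S)=v$ vertices, and write $N=(k+1)\bigl[1+(1-\phi_k(\alpha))/(\psi_k(\alpha)-1)\bigr]$. It suffices to prove that $\sum_{v>N}\E(X_v)\to 0$, whence a union bound finishes the proof.

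The core estimate is an upper bound on $\PP_{r,\p}(S\subset Y)=\prod_i p_i^{f_i(S)}$ depending only on $v$ and $k$. Since $S$ is strongly connected, the dual graph of $S$ (with $k$-simplices as nodes and shared $(k-1)$-faces as edges) is connected and admits a spanning tree; this yields an ordering $\sigma_1,\ldots,\sigma_{f_k(S)}$ of the $k$-simplices in which each $\sigma_j$ with $j\ge 2$ shares a $(k-1)$-face with some $\sigma_i$, $i<j$. Of these $f_k(S)-1$ steps, exactly $v-k-1$ introduce a previously unseen vertex, and each such vertex-adding step contributes precisely $\binom{k}{i}$ new $i$-faces (the sub-faces of $\sigma_j$ containing the new vertex), by Pascal's identity $\binom{k+1}{i+1}-\binom{k}{i+1}=\binom{k}{i}$. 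Adding in the $\binom{k+1}{i+1}$ $i$-faces of $\sigma_1$ gives
$$f_i(S)\ge \binom{k+1}{i+1}+(v-k-1)\binom{k}{i}.$$
Weighting by $\alpha_i$, summing, and using $\sum_i\binom{k+1}{i+1}\alpha_i=(k+1)\phi_k(\alpha)$ (immediate from the definition of $\phi_k$) along with $\sum_i\binom{k}{i}\alpha_i=\psi_k(\alpha)$, we deduce
$$\PP_{r,\p}(S\subset Y)\le n^{-(k+1)\phi_k(\alpha)-(v-k-1)\psi_k(\alpha)}.$$

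Combining with $\binom{n}{v}\le n^v$ and a count $C_{v,k}$ of strongly connected pure $k$-complexes on $v$ labeled vertices produces
$$\E(X_v)\le C_{v,k}\cdot n^{v(1-\psi_k(\alpha))+(k+1)(\psi_k(\alpha)-\phi_k(\alpha))}.$$
Since $\psi_k(\alpha)>1$, the exponent is a strictly decreasing affine function of $v$, and a routine rearrangement shows it vanishes exactly at $v=(k+1)(\psi_k(\alpha)-\phi_k(\alpha))/(\psi_k(\alpha)-1)=N$. Hence for each fixed $v>N$ we have $\E(X_v)\to 0$. The particular statement (\ref{f0yclec}) about minimal cycles follows because every $k$-dimensional minimal cycle is strongly connected: otherwise a decomposition $S=S_1\cup S_2$ with $\dim(S_1\cap S_2)\le k-2$ would force $b_k(S)=b_k(S_1)+b_k(S_2)$ by Mayer--Vietoris, producing a proper subcomplex with $b_k=1$ and contradicting minimality.

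The main obstacle is to make the count $C_{v,k}$ tight enough that the union bound $\sum_{v>N}\E(X_v)$ still tends to zero as $v$ ranges up to $n$. The trivial bound $2^{\binom{v}{k+1}}$ is far too large. The remedy is to combine two refinements: apply the shelling estimate to \emph{every} step (not only vertex-adding ones) to get the stronger bound $\PP_{r,\p}(S\subset Y)\le n^{-(k+1)\phi_k(\alpha)-(f_k(S)-1)\psi_k(\alpha)}$ so that each ``excess'' $k$-simplex beyond the spanning tree pays a geometric factor $n^{-\psi_k(\alpha)}<1$; and parametrize strongly connected $k$-complexes by a rooted dual spanning tree (Pr\"ufer-style), bounding the number on $v$ labeled vertices with $f_k$ top faces by at most $\binom{v}{k+1}((k+1)v)^{f_k-1}$ times a harmless tree factor. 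The resulting sum over $f_k\ge v-k$ is a geometric series that converges, and the remaining sum over $v>N$ decays geometrically in $v$ because $\psi_k(\alpha)>1$.
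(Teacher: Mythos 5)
Your core estimate is exactly the paper's: the shelling/induction bound $f_i(S)\ge \binom{k+1}{i+1}+(v-k-1)\binom{k}{i}$, the identities $\sum_i\binom{k+1}{i+1}\alpha_i=(k+1)\phi_k(\alpha)$ and $\sum_i\binom{k}{i}\alpha_i=\psi_k(\alpha)$, and the resulting exponent $v(1-\psi_k(\alpha))+(k+1)(\psi_k(\alpha)-\phi_k(\alpha))$, which vanishes at the stated threshold $N$. The Mayer--Vietoris argument that minimal cycles are strongly connected is also fine. But your global strategy --- summing $\E(X_v)$ over \emph{all} $v>N$ up to $n$ and controlling the number of isomorphism types --- is not the paper's, and the patch you propose for it does not work. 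The claimed strengthened bound $\PP_{r,\p}(S\subset Y)\le n^{-(k+1)\phi_k(\alpha)-(f_k(S)-1)\psi_k(\alpha)}$ is false: a non-vertex-adding step in the shelling is only guaranteed to introduce one new face, namely the top-dimensional simplex $\sigma_j$ itself (all of its proper faces may already be present), so each excess $k$-simplex pays only $n^{-\alpha_k}$, not $n^{-\psi_k(\alpha)}$ --- and $\alpha_k$ may be $0$. (Concretely, for $S=\partial\Delta^{k+2}$ your inequality would require $\binom{k+1}{i}\ge (k+1)\binom{k}{i}$, which fails for $i<k$.) Without that geometric decay per top face, the sum over $f_k$ of your count $\binom{v}{k+1}\bigl((k+1)v\bigr)^{f_k-1}$ diverges, and the union bound collapses.

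The paper sidesteps this entirely with a monotonicity trick you are missing: fix the single integer $N_0=\lceil N\rceil+1$ (or any fixed $N_0>N$). There are only finitely many isomorphism types of strongly connected pure $k$-complexes on exactly $N_0$ vertices --- a constant independent of $n$ --- and for each the expected number of copies in $Y$ is at most $n^{N_0(1-\psi_k(\alpha))+(k+1)(\psi_k(\alpha)-\phi_k(\alpha))}\to 0$. Then one observes that any strongly connected $k$-complex on more than $N_0$ vertices contains a strongly connected $k$-subcomplex on exactly $N_0$ vertices (truncate the shelling order at the step where the $N_0$-th vertex appears), so excluding the finitely many types on $N_0$ vertices already excludes everything larger. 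This reduces the whole problem to a first moment bound over a bounded family and removes any need to count complexes on $v$ vertices for large $v$. As written, your proof has a genuine gap at the union-bound stage; adopting the hereditary reduction repairs it.
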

\begin{proof}
Let $S$ be a strongly connected $k$-dimensional simplicial complex. Then 
$f_0(S)= k+1 +x,$ where $x\ge 0$, and one shows by induction on the number of $k$-simplexes in $S$ that 
$$f_i(S) \ge \binom {k+1} {i+1} +x \cdot \binom k i, \quad \mbox{where}\quad i=0, 1, \dots, k.$$
We know that the probability that $Y$ contains a subcomplex isomorphic to $S$ is less than or equal to 
\begin{eqnarray*}
n^{f_0(S)} \cdot \prod_{i=0}^r p_i^{f_i(S)} \le \\
n^{k+1+x}\cdot \prod_{i=0}^r p_i^{\binom {k+1} {i+1} + x\cdot \binom k i }= \\
n^{(k+1)\left[1-\phi_k(\alpha)\right]-x(\psi_k(\alpha)-1)}
\end{eqnarray*}
where we have used the identity (\ref{sumpsi1}). 
The exponent is negative if 
$$x> (k+1)\cdot \frac{1-\phi_k(\alpha)}{\psi_k(\alpha)-1},$$
which is equivalent to (\ref{f0}). 

For a fixed $N$ there are only finitely many isomorphism types of simplicial complexes $S$ on $N=f_0(S)$ vertices and the estimate above applies to each of them. 
Hence, for a fixed $$N>(k+1)\cdot \left[1+ \frac{1-\phi_k(\alpha)}{\psi_k(\alpha)-1}\right]$$ a random simplicial complex $Y\in \Omega_n^r$ 
contains no strongly connected 
subcomplexes
$S$ with $f_0(S)=N$. 
Finally, any strongly connected $k$-dimensional simplicial complex on more than $N$ vertices contains as a subcomplex a strongly connected $k$-dimensional simplicial complex on $N$ vertices, and Theorem \ref{smallcycles} follows. 
\end{proof}

Next we give a stronger statement for containment of spheres of dimensions above the critical dimension. 

\begin{theorem}\label{smallspheres} 
Let $Y$ be a random complex with respect to the multi-parameter $\p=n^{-\alpha}$ 
where 
 the vector of multi-exponent $\alpha=(\alpha_0, \alpha_1, \dots, \alpha_r)\in  \R^{r+1}_+$ is constant, i.e. does not depend on $n$. 
Suppose that an integer $k\ge 2$ is above the critical dimension and does not exceed the real dimension, i.e. inequalities (\ref{between}) are satisfied. 
Let $S$ be a $k$-dimensional simplicial sphere satisfying 
\begin{eqnarray}\label{f00}
f_0(S) > (k+1)\cdot \left[1+ \frac{1-\phi_k(\alpha)-\alpha_k\cdot (k+1)^{-1}}{\psi_k(\alpha)-1+(\alpha_{k-1}+\alpha_k)}\right].
\end{eqnarray}
Then $S$ cannot be simplicially embedded into $Y$, a.a.s.
\end{theorem}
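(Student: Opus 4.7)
The plan is to follow the first-moment scheme of Theorem~\ref{smallcycles}, but to replace the strong-connectivity bounds on the top two face numbers of $S$ by the sharper estimates that a simplicial sphere supports. Write $x = f_0(S) - (k+1) \ge 1$. Strong connectivity of $S$ still gives $f_i(S) \ge \binom{k+1}{i+1} + x\binom{k}{i}$ for $0 \le i \le k-2$, exactly as in the previous theorem. The sphere-specific input is the improved pair
\[ f_{k-1}(S) \;\ge\; (k+1)(1+x), \qquad f_k(S) \;\ge\; 2(1+x). \]
Since $S$ is a pseudomanifold, the first Dehn--Sommerville identity $(k+1)f_k(S) = 2 f_{k-1}(S)$ holds, so these two inequalities are equivalent and it suffices to prove $f_k(S) \ge 2(1+x)$. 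This follows from the Lower Bound Theorem for simplicial spheres, giving $f_k(S) \ge k\cdot f_0(S) - (k-1)(k+2) = kx + 2 \ge 2(1+x)$ for $k \ge 2$. For $k = 2$ the bound is in fact sharp: Euler's formula together with $3 f_2 = 2 f_1$ produces the exact identities $f_2(S) = 2(1+x)$ and $f_1(S) = 3(1+x)$. For $k = 3, 4$ an analogous elementary combination of Euler, the pseudomanifold relation, the low-index $h$-vector symmetries $h_i = h_{k+1-i}$, and the strong-connectivity bound on $f_1$ already recovers the inequality without invoking any deep result.

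Armed with these face-number bounds, I would next estimate the expected number of simplicial embeddings of $S$ into $Y$ by
\[ n^{f_0(S)} \prod_{i=0}^{k} p_i^{f_i(S)} \;=\; n^{E}. \]
Substituting $p_i = n^{-\alpha_i}$ and using the identity $\sum_{i=0}^{k}\alpha_i \binom{k+1}{i+1} = (k+1)\phi_k(\alpha)$ from (\ref{sumpsi1}), a routine algebraic simplification collapses the exponent to
\[ E \;=\; (k+1)\bigl(1 - \phi_k(\alpha)\bigr) \,-\, \alpha_k \,-\, x\bigl[\, \psi_k(\alpha) - 1 + \alpha_{k-1} + \alpha_k \,\bigr]. \]
Under hypothesis (\ref{between}) one has $\psi_k(\alpha) > 1$, so the bracket multiplying $x$ is strictly positive; condition (\ref{f00}) is precisely the statement that $x$ is large enough to force $E < 0$. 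Since $\alpha$ is independent of $n$ here, $n^{E} \to 0$, and Markov's inequality gives $\PP_{r,\p}(S \subset Y) \to 0$, so $S$ fails to embed into $Y$ asymptotically almost surely, as required.

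The hard part of the plan is the sphere-specific bound $f_k(S) \ge 2(1+x)$ in full generality. For $k \le 4$ the sign pattern of Euler's identity and the first few Dehn--Sommerville relations cooperates with strong connectivity on $f_1$ to give the inequality essentially for free, but for $k \ge 5$ the available elementary identities mix signs in an unfavourable way and strong-connectivity lower bounds alone are not enough; one must appeal to the Lower Bound Theorem for simplicial spheres (Barnette's original theorem for simplicial polytopes, extended to general simplicial spheres by Kalai via rigidity-theoretic arguments).
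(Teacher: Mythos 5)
Your argument is correct and yields exactly the inequality (\ref{f00}): the exponent you compute, $E=(k+1)(1-\phi_k(\alpha))-\alpha_k-x\left[\psi_k(\alpha)-1+\alpha_{k-1}+\alpha_k\right]$, is right, and $E<0$ is precisely (\ref{f00}) after substituting $x=f_0(S)-k-1$. But your route to the face-number bounds differs from the paper's. The paper does not touch the Lower Bound Theorem at all: it passes to the $h$-vector via (\ref{htof}), rewrites the first-moment non-embeddability criterion $\sum_i\alpha_if_i(S)>f_0(S)$ (quoted from \cite{CF15a}) as $\sum_j h_j(S)\gamma_j>h_1(S)+k+1$ with $\gamma_j\ge0$, and then simply discards all terms except $j=0,1,k,k+1$ using Stanley's nonnegativity $h_j(S)\ge0$ together with the Dehn--Sommerville symmetry $h_j=h_{k+1-j}$; since $\gamma_0=(k+1)\phi_k(\alpha)$, $\gamma_1=\psi_k(\alpha)$, $\gamma_k=\alpha_{k-1}+\alpha_k$, $\gamma_{k+1}=\alpha_k$, this is term-for-term the same lower bound on $\sum_i\alpha_if_i(S)$ that you assemble from strong connectivity (for $i\le k-2$), the pseudomanifold identity $(k+1)f_k=2f_{k-1}$, and the LBT. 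Two remarks on what each approach buys. First, your "hard part" is softer than you think: the sufficient bound $f_k(S)\ge 2(1+x)$ follows for every $k$ from $f_k=\sum_{j=0}^{k+1}h_j\ge h_0+h_1+h_k+h_{k+1}=2+2h_1$, i.e.\ from $h_j\ge0$ plus Dehn--Sommerville alone, so Barnette--Kalai is not needed (though it certainly suffices and even gives the stronger $f_k\ge kx+2$). Second, the paper's $h$-vector formulation makes it transparent why exactly the four coefficients $\gamma_0,\gamma_1,\gamma_k,\gamma_{k+1}$ appear in (\ref{f00}) and shows how the estimate could be sharpened further by retaining the terms $(\gamma_j+\gamma_{k+1-j})h_j$ for $2\le j\le\lfloor(k+1)/2\rfloor$; your face-number bookkeeping obscures this. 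Both proofs rest on a nontrivial fact about spheres ($h_j\ge0$ via Reisner--Stanley in the paper, the LBT in yours), so neither is more elementary, but they are interchangeable and equally valid here.
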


Note that (\ref{f00}) gives potentially stronger result compared with (\ref{f0}); indeed, we subtract a non-negative quantity 
$\alpha_k\cdot (k+1)^{-1}$
in the numerator and add a non-negative quantity $\alpha_{k-1}+\alpha_k$ in the denominator. 

\begin{proof}
Let $S$ be a pure, connected, $k$-dimensional simplicial complex.
Recall the notions of the {\it $f$-polynomial} and the $h$-polynomials associated to $S$. 
The $f$-polynomial is defined as
\begin{eqnarray}
f(t)=f_{-1}t^{k+1}+f_{0}t^{k}+f_{1}t^{k-1}+\ldots +f_{k-1}t+f_{k}.
\end{eqnarray}
where $f_{-1}=1$ and $f_i=f_i(S)$ are the face numbers of $S$. 
The {\it $h$-polynomial} associated to $S$ is defined by $$h(t)=f(t-1),$$ i.e.
\begin{eqnarray}
h(t)=h_{0}t^{k+1}+h_{1}t^{k}+\ldots +h_{k}t+h_{k+1}
\end{eqnarray}
where
\begin{eqnarray}\label{ftoh}
h_i=\sum_{j=0}^i (-1)^{i-j}\binom{k-j+1}{k-i+1}f_{j-1},\quad 0\leq i\leq k+1.
\end{eqnarray}
For example, one has $$h_0=1\quad \mbox{and}\quad h_1=f_0-k-1.$$
The system of linear equations (\ref{ftoh}) can be inverted to obtain $f_i$ as a nonnegative linear combinations of $h_0,\ldots, h_{i+1}$:
\begin{eqnarray}\label{htof}
f_{i-1}=\sum_{j=0}^i \binom{k-j+1}{k-i+1}h_j, \quad 0\leq i\leq k+1.
\end{eqnarray}

Let $Y\in\Omega_n^r$ be a random complex with respect to the probability multi-parameter $\mathfrak{p}=n^{-\alpha}$, where $\alpha=(\alpha_0,\alpha_1,\ldots,\alpha_r)$. In this section we assume that the vector of multi-exponents $\alpha$ is constant, i.e. it is independent of $n$.

Consider a fixed simplicial complex $S$ of dimension $k\leq r$ and let $f_i=f_i(S)$ denote the number of $i$-simplices in $X$.
By Theorem 1 from \cite{CF15a}, the condition
\begin{eqnarray}\label{nonemb}
\sum_{i=0}^{k}\alpha_if_i(S) \, >\, f_0(S)
\end{eqnarray}
implies then $S$ admits no embedding into $Y$, a.a.s. We want to express this condition in terms of the coefficients of the $h$-polynomial of $S$. 
We may use (\ref{htof}) and the equality $h_0=1$ to rewrite the sum $\sum_{i=0}^{d-1}\alpha_if_i(S)$ as
\begin{eqnarray*}
\sum_{i=0}^{k}\alpha_if_i(S) &=&\sum_{i=0}^{k}\alpha_i\left[\sum_{j=0}^{i+1}\binom{k-j+1}{i-j+1}h_j(S)\right]\\
&=&\sum_{j=0}^{k+1}h_j(S)\cdot\left[\sum_{i=j-1}^{k}\binom{k-j+1}{i-j+1}\alpha_i\right]\\
&=&\sum_{j=0}^{k+1}h_j(S)\cdot\gamma_j
\end{eqnarray*}
where we denote 
$$\gamma_j = \gamma_j(k,\alpha)=\sum_{i=j-1}^{k}\binom{k-j+1}{i-j+1}\alpha_i \, \ge 0,$$
where $j=0,1, \dots, k+1.$
Here we assume that $\alpha_{-1}=0$ by convention.
We obtain the following corollary:
{\it  Let $S$ be a $k$-dimensional simplicial complex where $k\le r$. If
\begin{eqnarray}\label{nonembed}
\sum_{j=0}^{k+1}  h_j(S)\cdot \gamma_j \, >\, h_1(S) +k+1
\end{eqnarray}
then $S$ is not embeddable into a random simplicial complex $Y\in \Omega_n^r$, a.a.s., 
with respect to the multi-parameter measure $\PP_{r, \p}$ where
$\p=n^{-\alpha}$, $\alpha=(\alpha_0, \dots, \alpha_r)$. 
}

We compute a few quantities $\gamma_j$:
\begin{eqnarray*}
\gamma_0&=&\sum_{i=0}^{k}\binom{k+1}{i+1}\alpha_i= (k+1)\cdot \phi_k(\alpha), \\ 
\gamma_1&=&\sum_{i=0}^{k}\binom{k}{i}\alpha_i=\psi_{k}(\alpha),\\
\gamma_{k}&=&\alpha_{k-1}+\alpha_{k},\\
\gamma_{k+1}&=&\alpha_{k}.
\end{eqnarray*}
%
%
%

If $S$ is a triangulated $k$-dimensional sphere then the Dehn -- Sommerville equations hold
\begin{eqnarray}\label{ds}
h_i(S) \, =\,  h_{k+1-i}(S),\quad \mbox{for} \quad 0\leq i\leq k+1.\end{eqnarray}
It is also well known (see \cite{St}) that
$h_i(S) \, \geq 0.$
Using the positivity $h_i(S)\ge 0$,  $\gamma_i\ge 0$ and the symmetry (\ref{ds}), we may simplify the above non-embedability criterion 
(\ref{nonembed}) by retaining only the terms containing $h_0, h_1, h_k, h_{k+1}$. We obtain the following statement:

{\it  Let $S$ be a $k$-dimensional simplicial sphere, where $k\le r$. If
\begin{eqnarray}\label{f01}
\gamma_0 +\gamma_{k+1} + (\gamma_1+ \gamma_k)\cdot h_1(S)  \, >\, h_1(S) +k+1,
\end{eqnarray}
then $S$ is not embeddable into a random simplicial complex $Y\in \Omega_n^r$, a.a.s.
}
We may rewrite (\ref{f01}) as 
$$h_1(S)> \frac{k+1-\gamma_0-\gamma_{k+1}}{\gamma_1+\gamma_k-1}$$
observing that $\gamma_1+\gamma_k-1>0$ since $\gamma_1=\psi_k(\alpha)>1$ according to (\ref{between}). Substituting here 
$h_1(S) =f_0(S) -k -1$ and expressing the values of $\gamma_0, \gamma_1, \gamma_k, \gamma_{k+1}$ in terms of $\alpha_i$ 
(see above) we obtain (\ref{f00}). 
\end{proof}

\bibliographystyle{amsalpha}

\end{document}